\documentclass[11pt]{amsart}

\usepackage{amssymb}
\usepackage{amsthm}
\usepackage{amsmath}
\usepackage{graphicx}
\usepackage{float}
\usepackage{amsaddr}
\usepackage[usenames,dvipsnames]{xcolor}
\usepackage[margin = 1in]{geometry}
\usepackage{enumerate}
\usepackage[toc,page]{appendix}
\usepackage{lineno}
\usepackage{color}
\usepackage{bbm}
\usepackage{bm}
\usepackage{hyperref}
\usepackage{mhchem}
\usepackage{siunitx}
\usepackage{tikz}
\usetikzlibrary{arrows.meta}
\usetikzlibrary{arrows.meta,angles,quotes}
\usepackage{subcaption}

\definecolor{mygreen}{rgb}{0.1,0.75,0.2}

\newtheorem{thm}{Theorem}[section]

\newtheorem{lem}[thm]{Lemma}
\newtheorem{prop}[thm]{Proposition}

\newtheorem{rem}[thm]{Remark}

\numberwithin{equation}{section}

\DeclareMathOperator{\argmin}{argmin}

\newcommand{\pt}{\partial}
\newcommand{\eps}{\varepsilon}
\newcommand{\ud}{\,\mathrm{d}}

\newcommand{\bR}{\mathbb{R}}

\newcommand{\sD}{\mathcal{D}}

\newcommand{\cl}{\scriptscriptstyle{\text{CL}}}
\newcommand{\nlg}{\scriptscriptstyle{\text{LG}}}
\newcommand{\nsg}{\scriptscriptstyle{\text{SG}}}
\newcommand{\nsl}{\scriptscriptstyle{\text{SL}}}

\newcommand{\cH}{\mathcal{H}}

\newcommand{\divS}{\operatorname{div}_{S}}

\begin{document}
\raggedbottom

\title[Phase-field Models and Sharp Interface Limits for Contact Line Dynamics]{Phase-Field Models, Sharp Interface Limits,\\
and Numerical Schemes for Contact Line Dynamics}

    \author{Guosheng Fu}
\address{Department of Applied and Computational Mathematics and Statistics, University of Notre Dame, Notre Dame, IN 46556}
\email{gfu@nd.edu}

\author{Yuan Gao}
\address{Department of Mathematics, Purdue University, West Lafayette, IN 47906}
\email{gao662@purdue.edu}

\author{Jian-Guo Liu}
\address{Department of Mathematics and Department of Physics, Duke University, Durham, NC 27708}
\email{jian-guo.liu@duke.edu}
\date{}

\begin{abstract}

We study phase-field and sharp-interface models for contact line dynamics of a liquid droplet on a solid substrate within a unified variational framework. The motion of the contact line, where liquid, gas, and solid phases meet, presents a fundamental difficulty in continuum modeling due to the classical stress singularity associated with no-slip hydrodynamics. Phase-field models regularize this singularity by introducing a thin transition layer of thickness $\delta$ and encoding interfacial effects through a Ginzburg--Landau free energy augmented by a wall energy on the substrate.

Starting from the total free energy $E = E_b + E_w$, we analyze two prototypical phase-field models: the Allen--Cahn equation and the Cahn--Hilliard equation. Using matched asymptotic expansions as $\delta \to 0$, we   recover their corresponding sharp-interface limits. In the Allen--Cahn case, the limit yields motion by mean curvature with a contact line law driven by deviations of the dynamic contact angle $\theta_{\mathrm{CL}}$ from Young's angle $\theta_Y$. In the Cahn--Hilliard case, the limit leads to a Mullins--Sekerka problem with the same form of contact line dynamics.

A central result of this work is the identification of consistent gradient-flow structures across both models. The Allen--Cahn dynamics correspond to an $L^2$-gradient flow, while the Cahn--Hilliard dynamics correspond to an $H^{-1}$-gradient flow, and both converge to sharp-interface evolutions that preserve the same energy--dissipation structure. This provides a unified interpretation of contact line motion as a consequence of a single  variational principle.

 Finally, we develop energy-stable numerical schemes based on the minimizing movement principle and establish discrete energy dissipation and well-posedness of the fully discrete problem. The numerical examples confirm that the Allen--Cahn and
Cahn--Hilliard schemes both relax toward the same stationary sharp interface  solution while their dynamics reflect the different dissipation mechanism.
\end{abstract}

\keywords{Onsager's principle, Interface and free boundary, Asymptotic analysis, Triple junction}

\maketitle

\tableofcontents
\newpage

\section{Introduction}

The motion of a liquid droplet on a solid substrate is a classical problem in fluid mechanics and materials science, with applications ranging from coating flows and inkjet printing to microfluidics.   A central difficulty lies in the motion of the contact line — the curve where the liquid–gas interface (capillary surface) meets the solid surface. The three surface tensions provide the leading driving force for the geometric motion of droplets; however, the mechanisms and accurate modeling of the competing dynamics between the moving capillary surface and the no-slip or moving contact line condition are not yet fully understood.  Classical hydrodynamic description for droplets with no-slip boundary conditions predict a non-integrable stress singularity at a moving contact line \cite{HuhScriven71}.  To remove this singularity, many different hydrodynamic models with Navier slip boundary condition are proposed to described the contact line moving consistent with microscopic slip mechanism, c.f.,  \cite{QianWangSheng06, RenHuE}.

 As a regularization model without hydrodynamic computations,  phase-field models provide a natural resolution of this difficulty by replacing the sharp liquid--gas interface with a thin transition layer of thickness $\delta > 0$, described by an order parameter $\phi \in [-1,1]$. The two pure phases correspond to $\phi = \pm 1$, while the interface is represented by a smooth transition between these values. Interfacial and wetting effects are encoded through a total free energy of the form
\[
E(\phi) = E_b(\phi) + E_w(\phi),
\]
where $E_b$ is a Ginzburg--Landau bulk energy and $E_w$ is a wall energy defined on the substrate $\Gamma$. This formulation naturally incorporates surface tension coefficients $\sigma_{LG}$, $\sigma_{SL}$, and $\sigma_{SG}$, and provides a variational framework for deriving the governing equations.

Phase-field models for moving contact lines have been extensively studied. Jacqmin \cite{Jacqmin00} derived a thermodynamically consistent formulation of the Cahn--Hilliard/Navier--Stokes system with boundary conditions that incorporate fluid--solid interactions. Qian, Wang, and Sheng \cite{QianWangSheng06} proposed a generalized Navier boundary condition based on molecular dynamics considerations, providing a continuum description consistent with microscopic slip mechanisms. Sharp-interface limits of phase-field models coupled with hydrodynamics have been analyzed, for example, by Yue and Feng \cite{YueFeng10, YueFeng11}, clarifying the role of diffusion and interfacial thickness in regularizing contact line motion.

In the absence of bulk hydrodynamics, Allen--Cahn  and Cahn--Hilliard phase-field models 
for contact line motion were studied by Xu and coauthors \cite{XuEtAl, ChenWangXu}. The sharp interface dynamics were derived via asymptotic expansion for the phase-field models in various settings; see, for instance, \cite{ChenWangXu,LuXu, LuXu21}. Gao and Liu \cite{GaoLiu1,GaoLiu2} provided a rigorous derivation of the contact line dynamics and established the gradient-flow structure of the corresponding sharp interface model. 

  Phase-field models are especially convenient for handling geometric changes of droplets, including merging and splitting where phase transitions happen. Compared with phase-field models, sharp-interface models are more capable of accurately capturing the contact line speed and the mechanism of competing triple surface tensions at the contact line.
The objective of this work is to establish a unified and consistent connection between phase-field models and their sharp-interface limits for contact line dynamics. In particular, we analyze Allen–Cahn and Cahn–Hilliard phase-field models for contact line dynamics and derive their sharp-interface limits, recovering sharp descriptions of contact line mechanism. We employ the Onsager variational principle and gradient-flow structure to unify all the dynamics, and address the relation between phase-field parameters, such as the interface thickness $\delta$ and mobility coefficients, and macroscopic quantities including surface tensions and contact line mobility. This allows one to use the numerically and geometrically tractable phase-field models while accurately recovering sharp descriptions of various physical phenomena in contact line dynamics.

We consider two fundamental phase-field models. The Allen--Cahn model describes non-conservative dynamics driven by an $L^2$-gradient flow of the energy $E(\phi)$, supplemented by a Lagrange multiplier enforcing a volume constraint. The Cahn--Hilliard model, in contrast, describes conservative dynamics corresponding to an $H^{-1}$-gradient flow, in which mass conservation is built into the evolution. In both cases,   the boundary condition on $\Gamma$ arises from the first variation of the wall energy $E_w$ and incorporates the effect of fluid--solid interactions. 

A key aspect of our analysis is the derivation of the sharp-interface limits as $\delta \to 0$ using matched asymptotic expansions. In this limit, the diffuse interface converges to a sharp capillary surface $S_t$ separating the liquid and gas phases, and the bulk free energy converges to the classical surface energy $\sigma_{LG} |S_t|$. The wall energy similarly reduces to $(\sigma_{SL} - \sigma_{SG}) |D_t|$, where $D_t \subset \Gamma$ is the wetted region. The resulting sharp-interface energy is
\[
E(S_t) = \sigma_{LG} |S_t| + (\sigma_{SL} - \sigma_{SG}) |D_t|.
\]
 The first variation expressed in terms of the normal velocity $V$ of capillary  surface and contact line velocity $v_{\mathrm{CL}}$   can be written as
\begin{equation} 
  \frac{\ud E}{\ud t}
  =\sigma_{\nlg}\int_{S_t}HV\,\ud\cH^2
  +\sigma_{\nlg}\int_{\partial D_t}
    (\cos\theta_{\cl}-\cos\theta_Y)\,v_{\cl}\,\ud\cH^1.
\end{equation}

To determine the motion of the droplet with $V$ and $v_{\cl}$, we identify a consistent variational structure across both phase-field models and their sharp-interface limits.

At the phase-field level, both the Allen--Cahn and Cahn--Hilliard equations are derived from the Onsager variational principle, which balances the rate of energy decrease with a dissipation functional. 
In the sharp-interface limit, this structure is preserved: the Allen--Cahn model corresponds to an $L^2$-gradient flow of $E(S_t)$, while the Cahn--Hilliard model corresponds to an $H^{-1}$-type gradient flow associated with the Mullins--Sekerka dynamics.
For the Allen--Cahn model, the sharp-interface limit yields motion by mean curvature in the bulk, with normal velocity $V$ satisfying
\[
\xi V = -\sigma_{LG} H - \Lambda,
\]
together with a contact line law relating the contact line velocity $v_{\mathrm{CL}}$ to the deviation of the dynamic contact angle $\theta_{\mathrm{CL}}$ from Young's angle $\theta_Y$
$$\zeta v_{\cl}
=
\sigma_{\nlg}(\cos\theta_Y - \cos\theta_{\cl}).$$
For the Cahn--Hilliard model, the limit leads to a Mullins--Sekerka problem, in which the chemical potential is harmonic in each phase and the interface velocity is determined by the jump of its normal derivative, while the same form of contact line law is retained on $\partial D_t$.

    Our derivation demonstrates that the phase-field and sharp-interface descriptions are fully consistent at the level of energetic variational formulation.

An important geometric feature of the problem is the relation between the normal velocity $V$ of the interface and the contact line velocity $v_{\mathrm{CL}}$ along the substrate. These are related by the kinematic identity
\[
V = v_{\mathrm{CL}} \sin \theta_{\mathrm{CL}},
\]
which reflects the attachment of the interface to the substrate.   This relation plays a crucial role in connecting the phase-field formulation with the sharp-interface description. This is actually a necessary condition to derive the sharp-interface with correct contact line motion  $v_{\mathrm{CL}}$. In particular, it explains the appearance of geometric factors such as $\sin \theta_{\mathrm{CL}}$ in intermediate formulations and ensures that all derived contact line laws are physically equivalent.   The attachment of the droplets on the substrate is assumed as a kinematic condition in the sharp interface description.   However, the conditions in the  phase-field description to ensure the droplet   attached on the substrate during dynamics  is still unknown.

Finally, we develop numerical schemes that preserve the variational structure of the models. Using a minimizing movement formulation, we construct fully discrete schemes based on $H^1$-conforming finite elements and prove discrete energy dissipation. Under a constraint for time step, the variational problem becomes convex, and thus it is implemented by the corresponding Euler-Lagrange equation.  We also introduce a convex-splitting scheme that is unconditionally stable. These results provide a robust computational framework consistent with the underlying analysis. The numerical examples confirm that the Allen--Cahn and
Cahn--Hilliard schemes relax toward the same stationary sharp interface spherical cap solution, while their dynamic paths to equilibrium reflect the different bulk dissipation.

The remainder of the paper is organized as follows. Section~2 introduces the phase-field and sharp-interface free energies and their first variations. Sections~3 and~4 analyze the Allen--Cahn and Cahn--Hilliard models, respectively, together with their sharp-interface limits. Section~5 presents the numerical schemes and their stability properties.

 \section{Free Energies: Phase-Field and Sharp Interface Models}
\label{sec:energy}

In this section, we first introduce the total free energies governing contact
line dynamics in both the phase-field and sharp-interface settings. The total
energy in the phase-field model consists of a bulk energy with a
Ginzburg--Landau potential and a wall energy encoding the preference between
the bistable states. For the small parameter $\delta$, we review the
$\Gamma$-convergence result for the phase-field energy, which yields the
limiting sharp-interface energy representing the surface energy of the
droplet. Finally, we compute the first variations of both energies. In
particular, the first variation of the sharp-interface energy is expressed as
a dual pairing of unbalanced force and velocity on both the capillary surface and
the contact line; see Theorem \ref{thm:first_variation_varifold}.

\subsection{Phase-field free energy}
\label{ssec:pf_energy}

Let $\Omega \subset \bR^3$ be a bounded Lipschitz domain representing the
container occupied by a two-phase fluid. The solid substrate is identified with
the flat portion
\[
  \Gamma := \{(x,y,z)\in\partial\Omega : z = 0\}.
\]
The phase-field variable $\phi : \Omega \times [0,\infty) \to [-1,1]$ encodes
the local composition of the fluid: $\phi = 1$ corresponds to the gas phase,
and $\phi = -1$ corresponds to the liquid phase. Across the liquid--gas interface,
$\phi$ varies smoothly over a thin transition layer of width $\delta > 0$.

\subsubsection*{Bulk energy}

The bulk free energy is defined in terms of the double-well Ginzburg--Landau
potential
\begin{equation}\label{eq:GL}
  F(\phi) = \tfrac{1}{4}(\phi^2 - 1)^2,
\end{equation}
which has global minima at $\phi = \pm 1$, corresponding to the two pure phases.

Throughout this paper, $\sigma_{\nlg}$, $\sigma_{\nsg}$, and $\sigma_{\nsl}$
denote the liquid--gas, solid--gas, and solid--liquid surface tension
coefficients, respectively, and are assumed to be positive constants. The
interface thickness parameter $\delta > 0$ will later be sent to zero in the
sharp-interface limit.

The bulk energy functional is given by
\begin{equation}\label{eq:E_b}
  E_b(\phi)
  := \frac{3\sqrt{2}}{4}\,
     \sigma_{\nlg}\int_\Omega \left[\frac{\delta}{2}|\nabla\phi|^2 + \frac{1}{\delta}F(\phi)\right]
     \ud^3 x,
\end{equation}
where the far-field boundary condition on $\partial\Omega\setminus\Gamma$ is
taken to be either $\partial_n\phi=0$ or $\phi=1$.

The prefactor $\frac{3\sqrt{2}}{4}$ is the normalization constant arising from
the classical $\Gamma$-convergence result of Modica~\cite{Modica87}. As
$\delta \to 0$,
\begin{equation}\label{eq:Gamma_lim}
  \Gamma\text{-}\lim_{\delta\to 0}\;
  \frac{3\sqrt{2}}{4}\, \sigma_{\nlg}
\int_\Omega\left[\frac{\delta}{2}|\nabla\phi|^2 + \frac{1}{\delta}F(\phi)\right]
  = \sigma_{\nlg}|S_t|,
\end{equation}
where $|S_t|$ denotes the two-dimensional Hausdorff measure of the sharp
liquid--gas interface $S_t$. Thus, in the sharp-interface limit, the diffuse
bulk energy converges to the classical surface energy of the capillary
interface.

\subsubsection*{Wall energy on the substrate}

To model fluid--solid interactions on the substrate $\Gamma$, we adopt the wall
energy density proposed by Jacqmin~\cite{Jacqmin00}:
\begin{equation}\label{eq:GLw}
  f_w(\phi) = \frac{\phi^3-3\phi}{4}+ \frac{1}{2}.
\end{equation}
This function satisfies
\[
  f_w(1) = 0,
  \qquad
  f_w(-1) = 1,
\]
so that the gas and liquid phases correspond to the two limiting values of the
wall energy density. Its derivative is
\begin{equation}\label{eq:f_w'}
  f_w'(\phi) = \frac{3}{4}(\phi^2-1),
\end{equation}
which vanishes at $\phi=\pm1$.

The associated wall energy functional is
\begin{equation}\label{eq:E_w}
  E_w(\phi)
  := (\sigma_{\nsl} - \sigma_{\nsg})\int_\Gamma f_w(\phi)\,\ud^2 x.
\end{equation}

In the sharp-interface limit $\delta \to 0$, the phase-field approaches
$\phi \approx 1$ in the gas region and $\phi \approx -1$ in the liquid region.
Assume that the sharp interface $S_t$ meets the substrate $\Gamma$, and let
$D_t \subset \Gamma$ denote the wetting region occupied by liquid on the
substrate. Its boundary $\partial D_t$ is the contact line. Since
$f_w(1)=0$ and $f_w(-1)=1$, the wall energy formally approximates the substrate
surface energy in the sharp-interface model:
\begin{equation}\label{Eph2}
  E_w(\phi) \approx (\sigma_{\nsl} - \sigma_{\nsg})|D_t|,
\end{equation}
where $|D_t|$ denotes the two-dimensional Lebesgue measure of $D_t$ on the
substrate.

\subsubsection*{Total phase-field free energy and its first variation}

The total free energy of the phase-field system is the sum of the bulk and wall
contributions:
\begin{equation}\label{eq:E_total}
  E = E_b + E_w.
\end{equation}
To derive the governing equations, we compute its first variation. For any test
function $\tilde\phi \in H^1(\Omega)$, integration by parts yields
\begin{align}
  \frac{\mathrm{d}}{\mathrm{d}\eps}\bigg|_{\eps=0} E(\phi + \eps\tilde\phi)
  &= \frac{3\sqrt{2}}{4}\,\sigma_{\nlg}
     \int_\Omega\left[\frac{1}{\delta}F'(\phi) - \delta\Delta\phi\right]
     \tilde\phi\,\ud^3 x  \notag\\
  &\quad
  + \frac{3\sqrt{2}}{4}\,\sigma_{\nlg}\,\delta
    \int_\Gamma \partial_n\phi\,\tilde\phi\,\ud^2 x  \notag\\
  &\quad
  + (\sigma_{\nsl} - \sigma_{\nsg})\int_\Gamma f_w'(\phi)\tilde\phi\,\ud^2 x .
  \label{eq:first_var}
\end{align}
Here $\partial_n\phi=-\partial_z\phi$ denotes the outward normal derivative on $\Gamma=\{z=0\}$, with
the normal pointing into the solid substrate.

\subsection{Sharp interface free energy and its first variation}
\label{ssec:si_energy_varifold}

We clarify the notation and conventions used throughout.

The interface $S_t$ is assumed to be a $C^2$ embedded surface with boundary $\partial S_t=\partial D_t$. The unit vector $n$ denotes the outward normal to $S_t$, pointing from liquid to gas. The quantity $V:=X\cdot n$ is the scalar normal velocity of $S_t$ induced by a velocity field $X$. The scalar mean curvature $H$ is defined following the convention that a sphere of radius $R$ with outward unit normal has $H=2/R>0$.

At a contact point $p$ on $\partial S_t=\partial D_t$, let $\tau$ be the positively oriented unit tangent vector, let $n_{\cl}$ be the outward unit normal to $\partial D_t$ within the substrate $\Gamma$, and let $b$ be the outward unit co-normal to $\partial S_t$ within the tangent plane $T_pS_t$.
The contact angle $\theta_{\cl}\in(0,\pi)$ is defined by
\begin{equation}\label{eq:contact_angle_def}
  b\cdot n_{\cl}=\cos\theta_{\cl}.
\end{equation}
Young's angle $\theta_Y$ is determined by Young's law
\begin{equation}\label{eq:Young_law}
  \cos\theta_Y=\frac{\sigma_{\nsg}-\sigma_{\nsl}}{\sigma_{\nlg}}.
\end{equation}
If the contact line moves on the substrate with normal speed $v_{\cl}$ in
the direction $n_{\cl}$, then the attachment condition implies
\begin{equation}\label{eq:contact_kinematic}
  V|_{\partial S_t}=v_{\cl}\sin\theta_{\cl}.
\end{equation}

\begin{figure}[htbp]
\centering

\begin{subfigure}[t]{0.48\textwidth}
\centering
\begin{tikzpicture}[scale=1.15,>=Latex]

\tikzset{
  line/.style={line width=0.7pt},
  vec/.style={->, line width=0.7pt},
}

\coordinate (O) at (0,0);
\coordinate (L) at (-2.8,0);
\coordinate (R) at (2.8,0);

\def\th{120}

\coordinate (T1) at ({1.3*cos(\th)},{1.3*sin(\th)});
\coordinate (C1) at ({0.9*cos(\th)},{0.9*sin(\th)});
\coordinate (C2) at (0.8,1.5);
\coordinate (P2) at (0.5,1.9);

\coordinate (Bdir) at ({-0.9*cos(\th)},{-0.9*sin(\th)});
\coordinate (Ndir) at ({0.8*sin(\th)},{-0.8*cos(\th)});
\coordinate (Ncldir) at (1.0,0);
\coordinate (Lin) at (-1.1,0);

\fill[blue!6] (-1.8,-0.15) rectangle (0,0);
\draw[line] (L) -- (R);
\draw[line,blue] (O) .. controls (C1) and (C2) .. (P2);

\fill (O) circle (1pt);

\draw[vec,red]
  (O) -- (Bdir)
  node[pos=1.10,left] {$b$};

\draw[vec,green!60!black]
  (O) -- (Ndir)
  node[pos=1.10,right] {$n$};

\draw[vec,orange!90!black]
  (O) -- (Ncldir)
  node[pos=1.10,below] {$n_{\mathrm{cl}}$};

\pic[
  draw,
  ->,
  line width=0.6pt,
  angle radius=0.7cm,
  "$\theta_{\mathrm{CL}}$"
] {angle = T1--O--Lin};

\end{tikzpicture}
\caption{Acute contact angle}
\end{subfigure}
\hfill
\begin{subfigure}[t]{0.48\textwidth}
\centering
\begin{tikzpicture}[scale=1.15,>=Latex]

\tikzset{
  line/.style={line width=0.7pt},
  vec/.style={->, line width=0.7pt},
}

\coordinate (O) at (0,0);
\coordinate (L) at (-2.8,0);
\coordinate (R) at (2.8,0);

\def\th{35}

\coordinate (T1) at ({1.3*cos(\th)},{1.3*sin(\th)});
\coordinate (C1) at ({0.9*cos(\th)},{0.9*sin(\th)});
\coordinate (C2) at (1.9,1.3);
\coordinate (P2) at (2.3,1.6);

\coordinate (Bdir) at ({-0.9*cos(\th)},{-0.9*sin(\th)});
\coordinate (Ndir) at ({0.8*sin(\th)},{-0.8*cos(\th)});
\coordinate (Ncldir) at (1.0,0);
\coordinate (Lin) at (-1.1,0);

\fill[blue!6] (-1.8,-0.15) rectangle (0,0);
\draw[line] (L) -- (R);
\draw[line,blue] (O) .. controls (C1) and (C2) .. (P2);

\fill (O) circle (1pt);

\draw[vec,red]
  (O) -- (Bdir)
  node[pos=1.10,below left] {$b$};

\draw[vec,green!60!black]
  (O) -- (Ndir)
  node[pos=1.10,right] {$n$};

\draw[vec,orange!90!black]
  (O) -- (Ncldir)
  node[pos=1.10,below] {$n_{\mathrm{cl}}$};

\pic[
  draw,
  ->,
  line width=0.6pt,
  angle radius=0.7cm,
  "$\theta_{\mathrm{CL}}$"
] {angle = T1--O--Lin};

\end{tikzpicture}
\caption{Obtuse contact angle}
\end{subfigure}

\caption{Contact line geometry for acute and obtuse contact angles $\theta_{\mathrm{CL}}$, measured inside the liquid region between the substrate $\Gamma$ and the tangent to the interface $S_t$. The orange vector $n_{\mathrm{cl}}$ is the outward unit normal to $\partial D_t$ within the substrate $\Gamma$. The red vector $b$ is the outward unit co-normal to $\partial S_t$ within the tangent plane $T_pS_t$. The green vector $n$ is the outward unit normal to $S_t$, pointing from the liquid to the gas phase.}
\label{fig:contact_angle_both}
\end{figure}

Combining the liquid--gas surface energy with the solid wetting contribution, the
sharp-interface total energy is
\begin{equation}\label{eq:E_sharp_varifold}
  E(S_t)=\sigma_{\nlg}|S_t|+(\sigma_{\nsl}-\sigma_{\nsg})|D_t|.
\end{equation}

We now give a derivation of the first variation formula using the language
of rectifiable varifolds. This makes precise the decomposition into bulk mean-curvature
and contact-line contributions.

\subsubsection*{Varifold setup}

Let $\mathcal{V}$ be a $2$-dimensional rectifiable varifold in $\bR^3$ with weight measure
$\|\mathcal{V}\|$. For any vector field $X\in C_c^1(\bR^3;\bR^3)$, the first variation of $\mathcal V$
is defined by
\begin{equation}\label{eq:varifold_first_variation_general}
  \delta \mathcal V(X)
  :=
  \int_{\bR^3\times G(2,3)}
  \operatorname{div}_S X(x)\,\ud \mathcal V(x,S),
\end{equation}
where $\operatorname{div}_S X(x)$ denotes the tangential divergence of $X$
with respect to the plane $S\in G(2,3)$.

Equivalently, when $\mathcal V$ is rectifiable, one has the representation
\begin{equation}\label{eq:varifold_first_variation_weight}
  \delta \mathcal V(X)
  =
  \int \operatorname{div}_S X \,\ud\|\mathcal V\|.
\end{equation}
We refer to \cite[Sec.~3]{Allard72}, \cite[Sec.~8.1]{Simon83}, and
\cite[Ch.~17]{Maggi12} for these definitions and properties.

\medskip

In the present smooth setting, let
\[
\mathcal V_t=\mathbf{v}(S_t,1)
\]
denote the multiplicity-one rectifiable varifold associated with a $C^2$ surface
$S_t$, i.e.\ the density function satisfies $\theta\equiv 1$. Then
\begin{equation}\label{eq:varifold_first_variation_smooth}
  \|\mathcal V_t\|=\cH^2\llcorner S_t,
  \qquad
  \delta \mathcal V_t(X)
  =
  \int_{S_t}\operatorname{div}_{S_t}X\,\ud\cH^2.
\end{equation}
This formula is standard; however, for completeness, we give the following two
lemmas to make the computations more explicit and transparent. The first lemma
gives an intuitive computation in the smooth setting using a perturbed flow
map. The second lemma explicitly computes the interior and boundary
contributions in the first variation.

\medskip

To make \eqref{eq:varifold_first_variation_smooth} more intuitive, it is useful
to interpret $X$ as generating a deformation of space. Let
$\{\Phi_\eps\}_{|\eps|<\eps_0}$ be the flow of diffeomorphisms associated with $X$,
defined by
\begin{equation}\label{eq:flow_map}
  \frac{\ud}{\ud\eps}\Phi_\eps(x)=X(\Phi_\eps(x)),
  \qquad
  \Phi_0(x)=x.
\end{equation}
The map $\Phi_\eps$ transports both the surface and its tangent planes, and the
first variation $\delta \mathcal V(X)$ measures the first-order change of the mass of the
varifold under this deformation. In this sense, $\delta \mathcal V(X)$ can be interpreted as
the directional derivative of area in the direction of $X$.

\medskip

\begin{lem}\label{lem2.1}
In this smooth setting, we have the identity
\begin{equation}\label{eq:flow_interpretation_smooth_appendix}
  \delta \mathcal V_t(X)
  =
  \left.\frac{\ud}{\ud\eps}\right|_{\eps=0}|S_\eps|,
\end{equation}
where $S_\eps := \Phi_\eps(S_t)$.
\end{lem}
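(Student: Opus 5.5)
We prove the identity by pulling the area of the deformed surface back to the fixed reference surface $S_t$ and differentiating under the integral sign. Fix a local parametrization, or simply work intrinsically. By the area formula,
\[
|S_\eps|=\int_{S_t} J_\eps(x)\,\ud\cH^2(x),\qquad J_\eps(x):=\big|\det\big(D\Phi_\eps(x)|_{T_xS_t}\big)\big|,
\]
where $J_\eps$ is the tangential Jacobian of $\Phi_\eps$ restricted to $S_t$. Concretely, if $e_1,e_2$ is an orthonormal basis of $T_xS_t$, then $J_\eps=\sqrt{\det G_\eps}$ with $(G_\eps)_{ij}=D\Phi_\eps e_i\cdot D\Phi_\eps e_j$. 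Because $\Phi_\eps$ is a flow of diffeomorphisms, $S_\eps$ is again a $C^2$ embedded surface with boundary, and the area formula applies without multiplicity issues.

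The key step is the expansion of $D\Phi_\eps$ to first order. From \eqref{eq:flow_map} and $\Phi_0=\mathrm{id}$, since $X\in C^1_c$, the map $\eps\mapsto\Phi_\eps$ is $C^1$ into $C^1$. This gives
\[
D\Phi_\eps(x)=I+\eps\,DX(x)+o(\eps)
\]
uniformly in $x$ on the compact set $\overline{S_t}$. Consequently,
\[
(G_\eps)_{ij}=\delta_{ij}+\eps\,(e_i\cdot DX e_j+e_j\cdot DX e_i)+o(\eps).
\]
Using $\det(I+\eps A)=1+\eps\operatorname{tr}A+o(\eps)$ and $\sqrt{1+s}=1+s/2+o(s)$, we obtain
\[
J_\eps=1+\eps\sum_{i=1}^2 e_i\cdot DX\,e_i+o(\eps)=1+\eps\,\operatorname{div}_{S_t}X+o(\eps),
\]
which is exactly the tangential divergence appearing in \eqref{eq:varifold_first_variation_general}.

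Finally, the $o(\eps)$ remainder is uniform in $x$ and $\cH^2(S_t)<\infty$. We may therefore differentiate under the integral sign, or use dominated convergence on the difference quotients $(J_\eps-1)/\eps$, to get
\[
\left.\frac{\ud}{\ud\eps}\right|_{\eps=0}|S_\eps|=\int_{S_t}\operatorname{div}_{S_t}X\,\ud\cH^2.
\]
By \eqref{eq:varifold_first_variation_smooth}, this equals $\delta\mathcal V_t(X)$, which proves \eqref{eq:flow_interpretation_smooth_appendix}.

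The only step needing care is the uniformity of the expansion of $D\Phi_\eps$. This follows from differentiating the flow equation in $x$: the variational equation $\frac{\ud}{\ud\eps}D\Phi_\eps=DX(\Phi_\eps)D\Phi_\eps$, combined with Gronwall's inequality, shows that $D\Phi_\eps-I-\eps DX$ is $o(\eps)$ uniformly, by uniform continuity of $DX$. Note also that the boundary $\partial S_t$ requires no special treatment here, since the area formula integrates over the whole surface. Boundary contributions appear only later, when one integrates $\operatorname{div}_{S_t}X$ by parts, which is the purpose of the second lemma.
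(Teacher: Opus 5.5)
Your proposal is correct and follows the paper's proof essentially step for step. Both use the area formula with the tangential Jacobian $\sqrt{\det G_\eps}$, the first-order expansion $D\Phi_\eps = I+\eps DX+o(\eps)$, and the determinant/trace expansion giving $J_\eps = 1+\eps\operatorname{div}_{S_t}X+o(\eps)$. Your variational-equation/Gronwall argument for the uniformity of the remainder, and your justification for differentiating under the integral, supply details the paper omits; the paper simply passes from $\Phi_\eps(x)=x+\eps X(x)+o(\eps)$ to the expansion of $D\Phi_\eps$.

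One harmless slip: for $X\in C^1_c$ the flow $\Phi_\eps$ is only a $C^1$ diffeomorphism, so $S_\eps$ is a $C^1$ surface, not necessarily $C^2$. This is all the area formula needs.
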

\begin{proof}

By the area formula,
\begin{equation}\label{eq:area_formula}
  |S_\eps|
  =
  \int_{S_t} J^{S_t}\Phi_\eps(x)\,\ud\cH^2(x),
\end{equation}
where $J^{S_t}\Phi_\eps(x)$ is the tangential Jacobian of $\Phi_\eps$ along $S_t$,
given by
\begin{equation}\label{Jacob}
  J^{S_t}\Phi_\eps(x)
  = \sqrt{\det\bigl(\langle D\Phi_\eps(x)\,\tau_i, D\Phi_\eps(x)\tau_j)\rangle\bigr)},
\end{equation}
for any orthonormal basis $\{\tau_1,\tau_2\}$ of $T_x S_t$.

Since $\Phi_\eps(x)=x+\eps X(x)+o(\eps)$, we have
\[
  D\Phi_\eps(x)=I+\eps\,DX(x)+o(\eps),
\]
and thus
\[
  D\Phi_\eps(x)\tau_i
  =
  \tau_i+\eps\,DX(x)\tau_i+o(\eps).
\]
Consequently,
\[
  \langle D\Phi_\eps\tau_i, D\Phi_\eps\tau_j\rangle
  =
  \delta_{ij}
  +
  \eps\bigl(
    \langle DX\tau_i,\tau_j\rangle
    +
    \langle \tau_i,DX\tau_j\rangle
  \bigr)
  + o(\eps).
\]
Using $\det(I+\eps A)=1+\eps\,\mathrm{tr}(A)+o(\eps)$, \eqref{Jacob} becomes
\[
  J^{S_t}\Phi_\eps(x)
  =
  1 + \eps\sum_{i=1}^2 \langle DX(x)\tau_i,\tau_i\rangle + o(\eps).
\]
Hence,
\begin{equation}\label{eq:jacobian_expansion}
  \left.\frac{\ud}{\ud\eps}\right|_{\eps=0} J^{S_t}\Phi_\eps(x)
  =
  \sum_{i=1}^2 \langle DX(x)\tau_i,\tau_i\rangle
  =:
  \operatorname{div}_{S_t}X(x).
\end{equation}

Differentiating \eqref{eq:area_formula} and using
\eqref{eq:jacobian_expansion}, we obtain
\[
  \left.\frac{\ud}{\ud\eps}\right|_{\eps=0}|S_\eps|
  =
  \int_{S_t}\operatorname{div}_{S_t}X\,\ud\cH^2.
\]
Therefore,
\[
  \delta \mathcal V_t(X)
  =
  \left.\frac{\ud}{\ud\eps}\right|_{\eps=0}|S_\eps|.
\]
\end{proof}
\medskip

This shows that, in the smooth setting, the abstract varifold first variation
coincides with the classical first variation of surface area under the flow
generated by $X$.

\medskip

Because $S_t$ has a boundary, $\delta \mathcal V_t$ contains both an interior curvature term
and a boundary term. We first give the precise formula.
\begin{lem}\label{lemV}
The first variation of the varifold $\mathcal{V}$ is
\begin{equation}\label{eq:area_derivative_varifold}
  \frac{\ud}{\ud t}|S_t|
  =\delta \mathcal V_t(X) =\int_{S_t}HV\,\ud\cH^2+\int_{\partial S_t}b\cdot X\,\ud\cH^1,
\end{equation}
where $b$ is the outward unit co-normal to
  $\partial S_t$ within $TS_t$.
\end{lem}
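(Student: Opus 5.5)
The plan is to start from the representation $\delta\mathcal V_t(X)=\int_{S_t}\divS X\,\ud\cH^2$ in \eqref{eq:varifold_first_variation_smooth}. Lemma \ref{lem2.1} identifies this quantity with $\frac{\ud}{\ud\eps}|_{\eps=0}|\Phi_\eps(S_t)|$. When $X$ is the velocity field generating the motion $S_t$, so that $S_{t+\eps}$ agrees with $\Phi_\eps(S_t)$ to first order in $\eps$, this is exactly $\frac{\ud}{\ud t}|S_t|$. This gives the first equality in \eqref{eq:area_derivative_varifold}. It remains to evaluate $\int_{S_t}\divS X$ by splitting $X$ into normal and tangential parts along $S_t$.

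\textbf{Decomposition.} On $S_t$ write $X = (X\cdot n)\,n + X^T = V n + X^T$, where $X^T\in TS_t$. Take an orthonormal frame $\{\tau_1,\tau_2\}$ of $T_xS_t$ and extend $n$ locally as a smooth unit vector field. Using \eqref{eq:jacobian_expansion} for the normal part,
\[
\divS(Vn)=\sum_{i}\langle D(Vn)\tau_i,\tau_i\rangle = V\sum_i\langle Dn\,\tau_i,\tau_i\rangle + \sum_i(\partial_{\tau_i}V)\langle n,\tau_i\rangle = V\,\divS n .
\]
The last term vanishes because $n\perp\tau_i$. With the stated convention (outward normal, sphere of radius $R$ has $H=2/R$), one checks on a sphere that $\divS n = H$: there $n=x/R$ and $\divS n = 2/R$. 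Hence $\int_{S_t}\divS(Vn)=\int_{S_t}HV\,\ud\cH^2$. This step also fixes the sign of the curvature term, which has to be checked against the convention.

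\textbf{Tangential part.} The field $X^T$ is a tangent vector field on the $C^2$ surface with boundary $S_t$, and $\divS X^T$ is its intrinsic divergence. The surface divergence theorem on a compact manifold with boundary gives
\[
\int_{S_t}\divS X^T\,\ud\cH^2=\int_{\partial S_t}X^T\cdot b\,\ud\cH^1 .
\]
Since $b\in TS_t$, we have $X^T\cdot b = X\cdot b$ on $\partial S_t$. Adding the normal and tangential contributions yields \eqref{eq:area_derivative_varifold}.

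\textbf{Main obstacle.} The delicate points are the regularity and extension issues at the contact line. First, $X$ must be a $C^1$ field defined near $\partial S_t$ so that the Jacobian expansion in Lemma \ref{lem2.1} holds up to the boundary. Second, the divergence theorem requires $S_t$ to be $C^2$ up to $\partial S_t$; this is assumed. A lesser point is that $\divS n$ must be computed independently of the chosen extension of $n$. This holds because only the tangential derivatives $\langle Dn\,\tau_i,\tau_i\rangle$ enter, and these depend only on $n|_{S_t}$. I would verify this explicitly. It is also cleanest to prove the identity $\divS n=H$ for general surfaces via the second fundamental form, $\divS n = \kappa_1+\kappa_2$, rather than only on the sphere.
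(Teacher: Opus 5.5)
Your proposal is correct and follows essentially the same route as the paper. You split $X=(X\cdot n)n+X^\top$, use $\divS(Vn)=V\,\divS n=HV$ for the normal part, and apply the surface divergence theorem with $b\in TS_t$ for the tangential part; your extra checks (the vanishing $\partial_{\tau_i}V\,\langle n,\tau_i\rangle$ term, the sign of $H$ on the sphere, and the appeal to Lemma~\ref{lem2.1} for the first equality) just make explicit what the paper treats as standard.
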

Below, we derive this formula using the decomposition of the vector field $X$.
Thus, the rate of change of area decomposes into a bulk term governed by mean
curvature and a boundary term governed by the motion of the contact line.

\begin{proof}[Proof of Lemma \ref{lemV}]

For clarity, let us derive \eqref{eq:area_derivative_varifold} by decomposing
$X$ into normal and tangential parts. Write
\begin{equation}\label{eq:X_decomposition}
  X=(X\cdot n)n+X^\top,
  \qquad X^\top\in TS_t.
\end{equation}
Then
\[
  \divS X=\divS\big((X\cdot n)n\big)+\divS X^\top.
\]
Hence
\begin{equation}\label{eq:split_delta}
  \delta \mathcal V_t(X)
  =\int_{S_t}\divS\big((X\cdot n)n\big)\,\ud\cH^2
   +\int_{S_t}\divS X^\top\,\ud\cH^2.
\end{equation}

\paragraph{Normal part.}
Let $f:=X\cdot n$. Since $n$ is normal to $S_t$, one has the standard identity
\[
  \divS(fn)=f\,\divS n.
\]
Under our convention $\divS n=H$, hence
\begin{equation}\label{eq:normal_part}
  \int_{S_t}\divS\big((X\cdot n)n\big)\,\ud\cH^2
  =\int_{S_t}H(X\cdot n)\,\ud\cH^2
  =\int_{S_t}HV\,\ud\cH^2.
\end{equation}

\paragraph{Tangential part.}
Since $X^\top$ is tangent to $S_t$, the surface divergence theorem gives
\begin{equation}\label{eq:tangent_part}
  \int_{S_t}\divS X^\top\,\ud\cH^2
  =\int_{\partial S_t} X^\top\cdot b\,\ud\cH^1
  =\int_{\partial S_t} X\cdot b\,\ud\cH^1,
\end{equation}
because $b\in TS_t$.

Combining \eqref{eq:split_delta}, \eqref{eq:normal_part}, and
\eqref{eq:tangent_part}, we obtain \eqref{eq:area_derivative_varifold}.
\end{proof}

\subsubsection*{First variation of the total energy}

We now combine the above lemmas to compute the variation of total energy
\eqref{eq:E_sharp_varifold}.

\begin{thm}[First variation of total energy]\label{thm:first_variation_varifold}
Let
\[
  E(S_t)=\sigma_{\nlg}|S_t|+(\sigma_{\nsl}-\sigma_{\nsg})|D_t|.
\]
Then
\begin{equation}\label{eq:dEdt_varifold_1}
  \frac{\ud E}{\ud t}
  =\sigma_{\nlg}\int_{S_t}HV\,\ud\cH^2
  +\sigma_{\nlg}\int_{\partial D_t}
    (\cos\theta_{\cl}-\cos\theta_Y)\,v_{\cl}\,\ud\cH^1.
\end{equation}
\end{thm}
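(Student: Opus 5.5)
We split the energy into its two pieces and differentiate each along the flow generated by a velocity field $X$ whose normal component on $S_t$ is $V$ and which is tangent to $\Gamma$ on the substrate, so that $\Phi_\eps$ maps $\Gamma$ to itself and $D_t$ to $D_\eps$. Lemma~\ref{lemV} gives the area term directly:
\[
\sigma_{\nlg}\frac{\ud}{\ud t}|S_t|=\sigma_{\nlg}\int_{S_t}HV\,\ud\cH^2+\sigma_{\nlg}\int_{\partial S_t}b\cdot X\,\ud\cH^1 .
\]
For the wetted region, we apply the two-dimensional Reynolds transport (divergence) theorem on the flat substrate. This gives $\frac{\ud}{\ud t}|D_t|=\int_{\partial D_t}X\cdot n_{\cl}\,\ud\cH^1=\int_{\partial D_t}v_{\cl}\,\ud\cH^1$, since by definition $v_{\cl}$ is the normal speed of $\partial D_t$ in the direction $n_{\cl}$.

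The key step is to identify the boundary integrand $b\cdot X$ on $\partial S_t=\partial D_t$. At a contact point, decompose $X$ in the orthonormal frame $\{\tau,n_{\cl},e_z\}$. Because $X$ is tangent to $\Gamma$ there, $X=(X\cdot\tau)\tau+v_{\cl}\,n_{\cl}$. The co-normal $b$ is orthogonal to $\tau$, since $\tau$ is tangent to $\partial S_t$, so $b\cdot X=v_{\cl}\,b\cdot n_{\cl}=v_{\cl}\cos\theta_{\cl}$ by \eqref{eq:contact_angle_def}. Equivalently, $b=\cos\theta_{\cl}\,n_{\cl}+\sin\theta_{\cl}\,e_z$, with $n=\sin\theta_{\cl}\,n_{\cl}-\cos\theta_{\cl}\,e_z$ (up to the sign conventions of Figure~\ref{fig:contact_angle_both}). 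Then $X\cdot n=v_{\cl}\sin\theta_{\cl}$, which is consistent with the kinematic condition \eqref{eq:contact_kinematic}.

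Summing the two contributions and using Young's law \eqref{eq:Young_law}, $\sigma_{\nsl}-\sigma_{\nsg}=-\sigma_{\nlg}\cos\theta_Y$, gives
\[
\frac{\ud E}{\ud t}=\sigma_{\nlg}\int_{S_t}HV\,\ud\cH^2+\sigma_{\nlg}\int_{\partial D_t}(\cos\theta_{\cl}-\cos\theta_Y)v_{\cl}\,\ud\cH^1 ,
\]
which is \eqref{eq:dEdt_varifold_1}.

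The main obstacle is justifying that we may choose such an $X$. We need an extension of the given motion ($V$ on $S_t$, $v_{\cl}$ on $\partial D_t$) that is tangent to $\Gamma$ on $\Gamma$, so the substrate is preserved. We also need to check that the result does not depend on the tangential part of $X$: interior tangential components integrate to boundary terms, which vanish in $b\cdot X$ except through $v_{\cl}$. To build $X$, we would first construct a field near $\partial D_t$ using the $C^2$ regularity of $S_t$ and the transversality $\theta_{\cl}\in(0,\pi)$. We would then take its normal part along $S_t$ and cut it off away from the contact line.
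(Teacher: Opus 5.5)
Your proof is correct and follows the paper's argument essentially line by line: Lemma~\ref{lemV} for the area term, tangency of $X$ to $\Gamma$ giving $X=v_{\cl}n_{\cl}+w\tau$ and hence $b\cdot X=v_{\cl}\cos\theta_{\cl}$, the Reynolds transport formula on $\Gamma$ for $|D_t|$, and Young's law (the paper simply takes $X(x)\in T_x\Gamma$ on $\partial S_t$ as the admissibility condition, without your extension discussion). One small sign slip in your side remark: with the paper's conventions the outward co-normal is $b=\cos\theta_{\cl}\,n_{\cl}-\sin\theta_{\cl}\,e_z$ and the normal is $n=\sin\theta_{\cl}\,n_{\cl}+\cos\theta_{\cl}\,e_z$ (cf.\ \eqref{eq:normal_proj}), but only $b\cdot n_{\cl}$ and $n\cdot n_{\cl}$ enter, so nothing in the argument changes.
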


\begin{proof}
First, differentiate \eqref{eq:E_sharp_varifold}:
\begin{equation}\label{eq:energy_diff_start}
  \frac{\ud E}{\ud t}
  =\sigma_{\nlg}\frac{\ud}{\ud t}|S_t|
   +(\sigma_{\nsl}-\sigma_{\nsg})\frac{\ud}{\ud t}|D_t|.
\end{equation}

Second, we express the boundary term in terms of the
contact line speed.

Using Lemma \ref{lemV},
we now compute $b\cdot X$ on $\partial S_t$ in terms of the contact line velocity.

Because the substrate is fixed, admissible variations must satisfy
\begin{equation}\label{eq:substrate_tangent_condition}
  X(x)\in T_x\Gamma
  \qquad\text{for }x\in\partial S_t.
\end{equation}
Therefore, the contact line moves within $\Gamma$, and its instantaneous velocity is
\begin{equation}\label{eq:contact_velocity_vector}
  X|_{\partial S_t}=v_{\cl}\,n_{\cl} + w\,\tau
\end{equation}
for some tangential reparametrization speed $w$ along the contact line. Since
$b\perp \tau$, only the $n_{\cl}$-component contributes:
\begin{equation}\label{eq:b_dot_X_1}
  b\cdot X = v_{\cl}\, b\cdot n_{\cl}.
\end{equation}
By the definition \eqref{eq:contact_angle_def} of $\theta_{\cl}$,
\begin{equation}\label{eq:b_dot_X_2}
  b\cdot X = v_{\cl}\cos\theta_{\cl}.
\end{equation}
Substituting into \eqref{eq:area_derivative_varifold} yields
\begin{equation}\label{eq:area_derivative_full}
  \frac{\ud}{\ud t}|S_t|
  =\int_{S_t}HV\,\ud\cH^2
   +\int_{\partial S_t}\cos\theta_{\cl}\,v_{\cl}\,\ud\cH^1.
\end{equation}

Third, we compute the variation of the wetting area $|D_t|$
by differentiating the substrate term $|D_t|$.

Since $D_t\subset \Gamma$ is a
moving domain on the fixed surface $\Gamma$, transported by the boundary speed
$v_{\cl}n_{\cl}$, the transport theorem on the surface $\Gamma$ gives
\begin{equation}\label{eq:wetting_area_derivative}
  \frac{\ud}{\ud t}|D_t|
  =\int_{\partial D_t} v_{\cl}\,\ud\cH^1.
\end{equation}
This is the $2$-dimensional Reynolds transport formula on the substrate.

Finally, combining \eqref{eq:area_derivative_full} and \eqref{eq:wetting_area_derivative},
\begin{align}
  \frac{\ud E}{\ud t}
  &=\sigma_{\nlg}\left(
      \int_{S_t}HV\,\ud\cH^2
      +\int_{\partial D_t}\cos\theta_{\cl}\,v_{\cl}\,\ud\cH^1
    \right)
   +(\sigma_{\nsl}-\sigma_{\nsg})
    \int_{\partial D_t}v_{\cl}\,\ud\cH^1 \notag\\
  &=\sigma_{\nlg}\int_{S_t}HV\,\ud\cH^2
   +\int_{\partial D_t}
      \Big(
        \sigma_{\nlg}\cos\theta_{\cl}
        +\sigma_{\nsl}-\sigma_{\nsg}
      \Big)
      v_{\cl}\,\ud\cH^1.
  \label{eq:energy_diff_mid}
\end{align}
By Young's law \eqref{eq:Young_law},
\[
  \sigma_{\nsl}-\sigma_{\nsg}
  =-\sigma_{\nlg}\cos\theta_Y.
\]
Hence
\[
  \sigma_{\nlg}\cos\theta_{\cl}+\sigma_{\nsl}-\sigma_{\nsg}
  =\sigma_{\nlg}(\cos\theta_{\cl}-\cos\theta_Y).
\]
Therefore
\[
  \frac{\ud E}{\ud t}
  =\sigma_{\nlg}\int_{S_t}HV\,\ud\cH^2
  +\sigma_{\nlg}\int_{\partial D_t}
   (\cos\theta_{\cl}-\cos\theta_Y)v_{\cl}\,\ud\cH^1.
\]
This is \eqref{eq:dEdt_varifold_1}. \end{proof}

\begin{rem}
The formula
\[
  \frac{\ud E}{\ud t}
  =\sigma_{\nlg}\int_{S_t}HV\,\ud\cH^2
  +\sigma_{\nlg}\int_{\partial D_t}
   (\cos\theta_{\cl}-\cos\theta_Y)v_{\cl}\,\ud\cH^1
\]
contains two contributions: the bulk mean-curvature term
\[
  \sigma_{\nlg}\int_{S_t}HV\,\ud\cH^2,
\]
which is the first variation of area in the interior, and the contact-line term
\[
  \sigma_{\nlg}\int_{\partial D_t}
  (\cos\theta_{\cl}-\cos\theta_Y)v_{\cl}\,\ud\cH^1,
\]
which represents the uncompensated Young force and vanishes when
$\theta_{\cl}=\theta_Y$. Thus the first variation splits into an interior
mean-curvature contribution and a boundary wetting contribution.
\end{rem}
 \section{Allen--Cahn Model: Phase-Field and Sharp-Interface}
\label{sec:AC}

In this section, we introduce a conservative phase-field Allen--Cahn model by
specifying a quadratic dissipation functional together with a global volume
constraint. We then derive the corresponding sharp-interface limit through
formal asymptotic analysis. The limiting sharp-interface model is governed by
mean curvature flow coupled with contact line dynamics, which recovers the
accurate contact line speed mechanism. We also give the variational structure
of the sharp-interface model.

\subsection{Phase-field Allen--Cahn model}

To describe dissipative dynamics, we introduce the dissipation functional
\begin{equation}\label{eq:diss_AC}
  {\mathcal D}_{\text{AC}}(u, u|_\Gamma)
  = \frac{3\sqrt{2}}{4}\,\frac{\xi \delta}2
    \int_\Omega u^2\,\ud^3 x
  + \frac{3\sqrt{2}}{4}\,\frac{\zeta\delta}2
    \int_\Gamma (u|_\Gamma)^2\,\ud^2 x,
\end{equation}
where $u = \partial_t\phi$ and $u|_\Gamma = \partial_t\phi|_\Gamma$ are the
bulk and surface rates of change, respectively. We keep the prefactor
$\frac{3\sqrt{2}}{4}$ as in the definition of the free energy
\eqref{eq:E_b} for both phase-field models. The friction coefficients
$\xi, \zeta > 0$ are associated with the bulk and substrate dissipation.
The factors of $\delta$ ensure that the dissipation functional has a finite
sharp-interface limit.

The $L^2$ Allen--Cahn dynamics do not conserve the spatial average of
$\phi$.  To describe a droplet with prescribed liquid volume, we impose the
global constraint
\[
  \int_\Omega \phi\,\ud^3 x = |\Omega|-2\operatorname{Vol},
\]
where $\operatorname{Vol}$ is the prescribed liquid volume.  Equivalently, the
rate satisfies $\int_\Omega \partial_t\phi\,\ud^3 x=0$, and we introduce a
scalar Lagrange multiplier $\Lambda$ for this constraint in the Onsager
principle. Taking the direction $\tilde\phi=\partial_t\phi$ in the first
variation \eqref{eq:first_var}, and adding the constraint term, gives the
constrained energy rate
\begin{align}
\frac{\ud}{\ud t}E(\phi(t))
-\frac{3\sqrt{2}}{4}\,\Lambda\int_\Omega\partial_t \phi\,\ud^3 x
  &= \frac{3\sqrt{2}}{4}\,\sigma_{\nlg}
     \int_\Omega\left[\frac{1}{\delta}F'(\phi) - \delta\Delta\phi\right]
     \partial_t \phi\,\ud^3 x \notag\\
  &\quad
  + \frac{3\sqrt{2}}{4}\,\sigma_{\nlg}\,\delta
    \int_\Gamma \partial_n\phi\,\partial_t \phi\,\ud^2 x \notag\\
  &\quad
  + (\sigma_{\nsl} - \sigma_{\nsg})\int_\Gamma f_w'(\phi)\partial_t \phi\,\ud^2 x
  - \frac{3\sqrt{2}}{4}\,\Lambda \int_\Omega\partial_t \phi\,\ud^3 x.
  \label{eq:rate}
\end{align}
The phase-field evolution is determined by the constrained Onsager
variational principle:
\begin{equation}\label{eq:Ray_AC}
  \min_{\partial_t\phi,\;\partial_t\phi|_\Gamma}
  \left[
  \frac{\ud}{\ud t}E(\phi(t))
  -\frac{3\sqrt{2}}{4}\,\Lambda\int_\Omega\partial_t\phi\,\ud^3 x
  + {\mathcal D}_{\text{AC}}(\partial_t\phi, \partial_t\phi|_\Gamma)
  \right].
\end{equation}
Taking variations of \eqref{eq:Ray_AC} with respect to
$\partial_t\phi$ and $\partial_t\phi|_\Gamma$, with the energy-rate terms
given by \eqref{eq:rate}, gives the first two equations below. The multiplier
$\Lambda$ is chosen so that variation of the associated constrained Lagrangian
with respect to $\Lambda$ gives
$\int_\Omega\partial_t\phi\,\ud^3 x=0$, which together with the initial condition
$\int_\Omega\phi(\cdot,0)\,\ud^3 x=|\Omega|-2\operatorname{Vol}$ gives the
last equation:
\begin{equation}\label{eq:AC}
  \left\{
  \begin{aligned}
    &\xi\delta\,\partial_t\phi
     = \sigma_{\nlg}\!\left(\delta\Delta\phi - \frac{1}{\delta}F'(\phi)\right) + \Lambda,
     \quad\text{in } \Omega,\\[4pt]
    &\zeta\delta\,\partial_t\phi
     = -\sigma_{\nlg}\delta\,\partial_n\phi
     - (\sigma_{\nsl}-\sigma_{\nsg})\,\frac{4}{3\sqrt{2}}f_w'(\phi),
     \quad\text{on } \Gamma,\\[4pt]
    &\int_\Omega \phi\,\ud^3 x = |\Omega|-2\operatorname{Vol}.
  \end{aligned}
  \right.
\end{equation}
The first equation is the Allen--Cahn equation with a global Lagrange multiplier
$\Lambda$ enforcing mass conservation. The second is a dynamic boundary
condition derived from the wall energy. The third is the integrated form of
the conservation law determined by the initial volume.

\subsection{Sharp-interface limit of the Allen--Cahn model}
\label{ssec:AC_SIL}

We carry out a matched asymptotic expansion, assuming a single connected
moving interface $S_t$ with interface thickness $\delta \ll 1$. For both the
asymptotic analysis and the gradient-flow structure of the sharp-interface
model, we assume that the sharp interface is attached to the substrate. This is
a kinematic condition in the derivation.

\subsubsection{Inner profile and motion by mean curvature}

In the inner region near $S_t$, we introduce the signed distance function
$d(x,t)$ to $S_t$, defined so that $d > 0$ in the gas phase and $d < 0$
in the liquid phase. We seek an ansatz $\phi(x,t) \approx \Phi(d(x,t)/\delta)$,
where $\Phi : \bR \to [-1,1]$ is the one-dimensional transition profile.

The signed distance function satisfies the following properties on $S_t$, where
$d=0$:
\[
\nabla d = n,
\]
where $n$ is the unit normal to $S_t$ pointing from liquid to gas.
Moreover,
\[
\partial_t d = -V.
\]
Indeed, if $d(x(t),t)=0$ on the evolving surface $S_t$, then
\[
\partial_t d+\nabla d\cdot\dot{x}(t)
=\partial_t d+V=0,
\]
where $V$ is the outward normal velocity of $S_t$. Finally,
\[
\Delta d = H,
\]
where $H$ is the mean curvature of $S_t$ with our convention $H=2/R>0$ for a
sphere of radius $R$.

With $r = d/\delta$, the chain rule gives the leading-order approximations
\begin{align}
  \partial_t\phi &\approx \frac{1}{\delta}\Phi'(r)\partial_t d = -\frac{1}{\delta}\Phi'(r)V, \label{eq:inner_t}\\
  \nabla\phi     &\approx \frac{1}{\delta}\Phi'(r)\nabla d = \frac{1}{\delta}\Phi'(r)n, \label{eq:inner_grad}\\
  \Delta\phi     &\approx \frac{1}{\delta^2}\Phi''(r)|\nabla d|^2 + \frac{1}{\delta}\Phi'(r)\Delta d
                  = \frac{1}{\delta^2}\Phi''(r) + \frac{1}{\delta}\Phi'(r)H. \label{eq:inner_lap}
\end{align}
Substituting into the first equation of \eqref{eq:AC} and collecting terms:
\[
  \xi\Phi'V = -\sigma_{\nlg}\!\left(\frac{\Phi'' - F'(\Phi)}{\delta} + \Phi'H\right) - \Lambda.
\]
\textbf{$O(\delta^{-1})$ balance (leading order):}
\begin{equation}\label{eq:ODE_Phi}
  \Phi'' - F'(\Phi) = 0, \quad r \in \bR.
\end{equation}

\textbf{Solving the profile equation.}
Multiply \eqref{eq:ODE_Phi} by $\Phi'$ and integrate:
\begin{equation}\label{eq:first_integral}
  \frac{1}{2}(\Phi')^2 = F(\Phi) + C.
\end{equation}
The boundary conditions $\Phi(\pm\infty) = \pm 1$ with $F(\pm 1) = 0$ force
$C = 0$. Hence
\[
  \Phi' = \frac{1}{\sqrt{2}}(1 - \Phi^2),
\]
which is a separable ODE. Separating variables:
\[
  \int_{0}^{\Phi}\frac{d\phi'}{1-(\phi')^2} = \frac{r}{\sqrt{2}},
\]
giving the unique monotone solution
\begin{equation}\label{eq:tanh}
  \Phi(r) = \tanh\!\left(\frac{r}{\sqrt{2}}\right).
\end{equation}
Near the interface, $\phi(x,t) \approx \tanh\!\left(d(x,t)/(\sqrt{2}\,\delta)\right)$.

\textbf{$O(1)$ balance (sub-leading order):}
Projecting the $O(1)$ equation onto the translational mode $\Phi'$, and
absorbing the resulting constant factor and sign into the spatially constant
multiplier, gives the \emph{motion by mean curvature}:
\begin{equation}\label{eq:MMC}
   \xi V = -\sigma_{\nlg}H + \Lambda.
\end{equation}
The Lagrange multiplier $\Lambda$ enforces global mass conservation and plays
the role of a spatially constant chemical potential. 

\subsubsection{Recovery of the contact line velocity}
\label{sssec:CL_vel}

We now derive the contact line law from the boundary condition in
\eqref{eq:AC}.  Near the contact line, we again use
$\phi \approx \Phi(d(x,t)/\delta)$.  The boundary condition on $\Gamma$ becomes
\[
  \zeta\Phi'\partial_t d = -\sigma_{\nlg}\Phi'\partial_n d
  - (\sigma_{\nsl}-\sigma_{\nsg})\frac{4}{3\sqrt{2}} f_w'(\Phi).
\]
Using \eqref{eq:f_w'}, we have
$\frac{4}{3\sqrt{2}}f_w'(\Phi) = \frac{1}{\sqrt{2}}(\Phi^2-1) = -\Phi'$.
Since $\partial_n d=-\partial_z d$ on $\Gamma$, the equation simplifies to
\[
  \zeta\Phi'\partial_t d = \sigma_{\nlg}\Phi'\partial_z d + (\sigma_{\nsl}-\sigma_{\nsg})\Phi'.
\]
Dividing through by $\Phi' > 0$:
\begin{equation}\label{eq:CL_dist}
  \zeta\partial_t d = \sigma_{\nlg}\partial_z d+\sigma_{\nsl}-\sigma_{\nsg}.
\end{equation}

On the substrate $\Gamma$, the outward normal of $\Omega$ points into the solid,
so $\partial_n d=-\partial_z d$. Since $\nabla d = n$ points from liquid to gas
and makes angle $\theta_{\cl}$ with $\hat z$, we have
\begin{equation}\label{eq:normal_proj}
  \partial_z d = \hat z\cdot\nabla d = \hat z\cdot n = \cos\theta_{\cl}.
\end{equation}
Here $\theta_{\cl}$ is the dynamic contact angle (inner angle between $S_t$
and $\Gamma$).

The time derivative $\partial_t d$ on the substrate at the contact point
$Z(t) = (x(t),y(t),0) \in \partial D_t$
can be related to the contact line speed by differentiating
$d(Z(t), t) \equiv 0$:
\begin{equation}\label{eq:dt_d}
  \partial_t d|_{\partial D_t} + \nabla d\cdot\dot{Z} = 0.
\end{equation}
Since $\dot{Z}=v_{\cl}n_{\cl}$ lies in $\Gamma$ and
$\nabla d\cdot n_{\cl}=\sin\theta_{\cl}$ at the contact point, this gives
$\partial_t d|_{\partial D_t} = -\nabla d\cdot\dot{Z} = -v_{\cl}\sin \theta_{\cl}$.

Substituting \eqref{eq:normal_proj} and
$\partial_t d = -v_{\cl}\sin\theta_{\cl}$
into \eqref{eq:CL_dist}, and using Young's equation:
\begin{equation}\label{eq:CL_law}
\zeta v_{\cl} = \frac{\sigma_{\nlg}(\cos\theta_Y - \cos\theta_{\cl})}{\sin\theta_{\cl}}.
\end{equation}

\begin{rem}\label{remarkS}
The physically natural form of the contact line law is
\begin{equation}
\zeta v_{\cl}
=
\sigma_{\nlg}(\cos\theta_Y - \cos\theta_{\cl}),
\end{equation}
where $\zeta$ is the contact line friction coefficient.

In the phase-field derivation, the evolution law is first obtained in terms
of the normal velocity $V = -\partial_t d$. Using the geometric relation
  $V = v_{\cl}\sin\theta_{\cl}$ leads to an equivalent expression involving
a factor $1/\sin\theta_{\cl}$. Equivalently, the factor $\sin\theta_{\cl}$ can
be absorbed into the effective contact-line friction coefficient, and does not
represent a distinct physical mechanism.
\end{rem}
 
After absorbing the factor $\sin\theta_{\cl}$ into the effective contact-line
friction coefficient, still denoted by $\zeta$, the sharp-interface model
obtained as the sharp-interface limit of the Allen--Cahn model \eqref{eq:AC} is
\begin{equation}\label{eq:SI_AC}
\left\{
\begin{aligned}
&\xi V = -\sigma_{\nlg} H + \Lambda, \quad \text{on } S_t,\\
    &\zeta v_{\cl} = \sigma_{\nlg}(\cos\theta_Y - \cos\theta_{\cl}), \quad \text{on }\pt D_t,\\
    & |\Omega_L(t)|=\operatorname{Vol},\\
    & \pt S_t = \pt D_t.
\end{aligned}
\right. 
\end{equation}
We impose that the capillary surface is attached to the substrate,
$\pt S_t = \pt D_t$. This is a kinematic condition used in the derivation.

\subsection{Variational structure for the sharp-interface system from Allen--Cahn}

The sharp-interface system \eqref{eq:SI_AC} can be derived directly as an
$L^2$-gradient flow of the energy $E(S_t)$ with respect to the dissipation
functional
\begin{equation}\label{eq:diss_SI_AC}
 \sD_{\text{AC}}
  = \frac{\xi}2\int_{S_t} V^2\,\ud\cH^2
  + \frac{\zeta}2 \int_{\partial D_t} v_{\cl}^2\,\ud\cH^1.
\end{equation}
Indeed, minimizing $\frac{\ud E}{\ud t} + \sD_{\text{AC}}$ over $V$ and
$v_{\cl}$, and using Theorem~\ref{thm:first_variation_varifold} after including
the volume constraint, immediately yields \eqref{eq:SI_AC}.

In the graph representation $h(x,y,t)$, $(x,y)\in D_t$, the sharp-interface
model with volume constraint is \cite{GaoLiu21}
\begin{equation}\label{eq:SI_AC_graph}
  \left\{
  \begin{aligned}
    \xi\,\frac{\partial_t h}{\sqrt{1+|\nabla h|^2}}
    &= \sigma_{\nlg}\nabla\cdot\!\left(\frac{\nabla h}{\sqrt{1+|\nabla h|^2}}\right)
      - \Lambda,
      \quad\text{on } D_t,\\[4pt]
      h&=0, \quad\text{on } \pt D_t,\\
    \zeta v_{\cl}
    &= \sigma_{\nlg}\!\left(\cos\theta_Y
      - \frac{1}{\sqrt{1+|\nabla h|^2}}\right),
      \quad\text{on } \partial D_t,\\[4pt]
    \int_{D_t} h\,\ud x\,\ud y &= \operatorname{Vol}.
  \end{aligned}
  \right.
\end{equation}
We refer to
\cite{GL21} for the detailed derivation in the graph representation.

 \section{Cahn--Hilliard Model: Phase-Field and Sharp-Interface}
\label{sec:CH}

In this section, we introduce the phase-field Cahn--Hilliard model through an
$H^{-1}$ bulk dissipation mechanism coupled with the same substrate
dissipation as in the Allen--Cahn model. We then derive its sharp-interface
limit by formal asymptotic analysis. The substrate dynamics are the same as in
the Allen--Cahn case because the leading-order inner profile is again the
solution \eqref{eq:tanh}. The capillary surface dynamics, however, differ from
those of the Allen--Cahn model: instead of local motion by mean curvature, the
Cahn--Hilliard model yields the nonlocal Mullins--Sekerka evolution
\eqref{nonlocalV}, first derived by Pego~\cite{Pego89}. We also present the
variational structures of both the phase-field model and the sharp-interface
Mullins--Sekerka model with contact line dynamics.

\subsection{Phase-field Cahn--Hilliard model}

We consider the same free energy functional $E = E_b + E_w$ as
defined in Section~\ref{ssec:pf_energy}. In contrast to the Allen--Cahn
case, no explicit Lagrange multiplier is introduced here, since mass
conservation is built into the Cahn--Hilliard dynamics through the bulk
$H^{-1}$ metric. We impose the far-field boundary condition
$\partial_n\phi = 0$ or $\phi = 1$ on $\partial\Omega\setminus\Gamma$.

To reflect this conservation law, we introduce the dissipation functional
\begin{equation}\label{eq:diss_CH_full}
  {\mathcal D}_{\text{CH}}(u, u|_\Gamma)
  =\frac{3\sqrt{2}}{4}\,\frac{\xi}2
  \int_\Omega u(-\Delta_N)^{-1}u\,\ud^3 x
  +\frac{3\sqrt{2}}{4}\,\frac{\zeta\delta}{2}
  \int_\Gamma (u|_\Gamma)^2\,\ud^2 x,
\end{equation}
where $u = \partial_t \phi$, and $(-\Delta_N)^{-1}$ denotes the inverse
Neumann Laplacian acting on mean-zero functions.

The first term in \eqref{eq:diss_CH_full} induces an $H^{-1}(\Omega)$ metric in
the bulk, which enforces mass conservation, while the second term is the
$L^2(\Gamma)$ dissipation localized at the substrate.

Recalling the first variation of the free energy from \eqref{eq:first_var}, we
have
\begin{align*}
\frac{\ud}{\ud t}E(\phi(t))
  &= \frac{3\sqrt{2}}{4}\,\sigma_{\nlg}
     \int_\Omega\left[\frac{1}{\delta}F'(\phi) - \delta\Delta\phi\right]
     \partial_t \phi\,\ud^3 x \\
  &\quad
  + \frac{3\sqrt{2}}{4}\,\sigma_{\nlg}\,\delta
    \int_\Gamma \partial_n\phi\,\partial_t \phi\,\ud^2 x \\
  &\quad
  + (\sigma_{\nsl} - \sigma_{\nsg})
    \int_\Gamma f_w'(\phi)\,\partial_t \phi\,\ud^2 x.
\end{align*}

Via the Onsager variational principle,
the evolution of $\phi$ is determined by minimizing the sum of the energy rate
and the dissipation functional:
\begin{equation}\label{Ray1}
  \min_{\partial_t\phi,\;\partial_t\phi|_\Gamma}
  \left[
  \frac{\ud}{\ud t}E(\phi(t))
  + {\mathcal D}_{\text{CH}}(\partial_t\phi, \partial_t\phi|_\Gamma)
  \right].
\end{equation}

\medskip
\noindent

\begin{prop}[Cahn--Hilliard system with contact line dynamics]
The Onsager variational principle yields the following system:
\begin{equation}\label{eq:CH_full}
  \left\{
  \begin{aligned}
    &\xi\,\partial_t\phi = \Delta\mu,
    \quad
    \mu = -\sigma_{\nlg}\delta\Delta\phi
    + \frac{\sigma_{\nlg}}{\delta}F'(\phi),
    \quad\text{in }\Omega,\\[4pt]
    &\zeta\delta\,\partial_t\phi
     = -\sigma_{\nlg}\delta\,\partial_n\phi
     - (\sigma_{\nsl}-\sigma_{\nsg})\,
       \frac{4}{3\sqrt{2}}f_w'(\phi),
     \quad\text{on }\Gamma,\\[4pt]
    &\partial_n\phi = 0 \quad \text{on } \partial\Omega\setminus\Gamma,
    \qquad
    \partial_n\mu = 0 \quad \text{on } \partial\Omega.
  \end{aligned}
  \right.
\end{equation}
\end{prop}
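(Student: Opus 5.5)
The plan is to compute the Euler--Lagrange equations of the quadratic functional in \eqref{Ray1}, treating the bulk rate $u=\partial_t\phi$ (mean zero, so that $(-\Delta_N)^{-1}u$ is defined) and its trace $u|_\Gamma$ as the unknowns. The key move is to introduce the auxiliary potential
\[
w:=(-\Delta_N)^{-1}u ,\qquad -\Delta w=u \ \text{in }\Omega,\qquad \partial_n w=0 \ \text{on }\partial\Omega,\qquad \int_\Omega w\,\ud^3x=0 .
\]
Since $(-\Delta_N)^{-1}$ is self-adjoint on mean-zero functions, the variation of the first term of \eqref{eq:diss_CH_full} in a mean-zero direction $\tilde u$ is $\frac{3\sqrt{2}}{4}\xi\int_\Omega w\,\tilde u$. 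Because both $\frac{\ud E}{\ud t}$ (written via \eqref{eq:first_var}) and $\mathcal D_{\text{CH}}$ carry the common factor $\frac{3\sqrt2}{4}$, I will factor it out at once, and likewise rescale the wall term by $\frac{4}{3\sqrt2}$.

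\textbf{Step 1: bulk variations.} I take variations $\tilde u\in C_c^\infty(\Omega)$ with $\int_\Omega\tilde u=0$, which do not affect the trace. Stationarity gives
\[
\int_\Omega\Big(\sigma_{\nlg}\big[\tfrac1\delta F'(\phi)-\delta\Delta\phi\big]+\xi w\Big)\tilde u\,\ud^3x=0
\]
for all such $\tilde u$. Hence $\mu+\xi w=c$ for some constant $c$, where $\mu$ is the chemical potential in \eqref{eq:CH_full}. Applying $-\Delta$ yields $-\Delta\mu=\xi\Delta w=-\xi u$, that is, $\xi\partial_t\phi=\Delta\mu$. The Neumann condition $\partial_n w=0$ then transfers to $\partial_n\mu=0$ on all of $\partial\Omega$.

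\textbf{Step 2: boundary variations.} Next I vary $u|_\Gamma$ independently. I will justify this by choosing bulk perturbations supported in thin boundary layers near $\Gamma$, whose $H^{-1}$ contribution vanishes in the limit, in the same spirit in which the Allen--Cahn dynamic condition in \eqref{eq:AC} is obtained. The $\Gamma$ terms then give
\[
\sigma_{\nlg}\delta\,\partial_n\phi+(\sigma_{\nsl}-\sigma_{\nsg})\tfrac{4}{3\sqrt2}f_w'(\phi)+\zeta\delta\,u|_\Gamma=0,
\]
which is the second equation of \eqref{eq:CH_full}. Finally, $\partial_n\phi=0$ on $\partial\Omega\setminus\Gamma$ is the imposed far-field condition; equivalently, it is the natural boundary condition produced by the integration by parts in \eqref{eq:first_var} when $\tilde\phi$ is free there.

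\textbf{Main obstacle.} The delicate point is Step 2, namely the decoupling of $u$ and $u|_\Gamma$. For an $H^{-1}$ dissipation the trace of $u$ is not controlled by the bulk term, so regarding $u|_\Gamma$ as an independent variable must be made precise. My first attempt will simply follow the paper's convention in \eqref{Ray1}, which minimizes over the pair $(\partial_t\phi,\partial_t\phi|_\Gamma)$ as independent rates with the constraint $\int_\Omega u=0$ imposed only in the bulk. I would then check consistency afterwards: integrating $\xi\partial_t\phi=\Delta\mu$ against $1$ with $\partial_n\mu=0$ confirms that mass conservation holds automatically, so no Lagrange multiplier is needed.
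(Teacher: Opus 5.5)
Your proposal is correct and follows the paper's proof essentially step for step. You introduce $w=(-\Delta_N)^{-1}u$, use the bulk variation to get $\mu+\xi w$ constant, and apply $\Delta$ to obtain $\xi\partial_t\phi=\Delta\mu$; you then vary the substrate rate $u|_\Gamma$ as an independent unknown, which is exactly the convention in \eqref{Ray1}, to obtain the dynamic boundary condition. Two small points where you are more careful than the paper: you keep the constant $c$ forced by the mean-zero restriction on variations (the paper writes $\mu+\xi\psi=0$), and you derive $\partial_n\mu=0$ directly from $\partial_n w=0$ instead of calling it a natural boundary condition. Your thin-layer justification in Step~2 is not actually needed: once $\mu+\xi w\equiv c$ holds in $\Omega$, the bulk contribution of any mean-zero direction vanishes identically.
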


\begin{proof}
We take first variations of \eqref{Ray1} with respect to the bulk rate
$u=\partial_t\phi$ and the substrate rate $u|_\Gamma=\partial_t\phi|_\Gamma$.
Since the Cahn--Hilliard rate has zero mean, let
$\psi=(-\Delta_N)^{-1}u$, so that $-\Delta\psi=u$ with homogeneous Neumann
boundary condition and zero mean. Variation with respect to $u$ gives
\[
  \frac{3\sqrt{2}}{4}\left[
  \sigma_{\nlg}\!\left(\frac{1}{\delta}F'(\phi)-\delta\Delta\phi\right)
  + \xi\psi
  \right]=0.
\]
With
\begin{equation}\label{eq:chem}
\mu = -\sigma_{\nlg}\delta\Delta\phi
    + \frac{\sigma_{\nlg}}{\delta}F'(\phi),
\end{equation}
this stationarity condition is $\mu+\xi\psi=0$. Applying $\Delta$ and using
$-\Delta\psi=\partial_t\phi$ yields
\[
\xi\,\partial_t \phi = \Delta \mu.
\]

The variation with respect to $\partial_t\phi|_\Gamma$ balances the boundary
part of the energy rate with the substrate dissipation, giving
\[
\zeta\delta\,\partial_t\phi
= -\sigma_{\nlg}\delta\,\partial_n\phi
- (\sigma_{\nsl}-\sigma_{\nsg})
\frac{4}{3\sqrt{2}}f_w'(\phi).
\]

The remaining conditions are the natural boundary conditions associated with
the variational formulation.
\end{proof}

Integrating the bulk equation over $\Omega$, we obtain
\[
\xi\frac{\ud}{\ud t}\int_\Omega \phi\,\ud^3 x
=
\int_\Omega \Delta \mu\,\ud^3 x
=
\int_{\partial \Omega} \partial_n \mu\,\ud^2 x
=
0,
\]
using the Neumann boundary condition $\partial_n \mu = 0$. Thus the Cahn--Hilliard dynamics conserve the total mass.

The system \eqref{eq:CH_full} can be interpreted as follows. The bulk equation
\[
\partial_t \phi = \frac{1}{\xi}\Delta \mu
\]
describes diffusion driven by gradients of the chemical potential. The chemical potential $\mu$ contains both interfacial effects through $\Delta\phi$ and bulk thermodynamic effects through $F'(\phi)$. The boundary condition on $\Gamma$ models contact-line relaxation caused by an imbalance of surface tensions.

\subsection{Sharp-interface limit of the Cahn--Hilliard system}
\label{ssec:CH_SIL_full}
We now derive the sharp-interface limit from the Cahn--Hilliard system
\eqref{eq:CH_full}. This asymptotic analysis follows the classical result of
Pego~\cite{Pego89}. We present the formal derivation and clarify the
intermediate steps.
We assume a well-prepared initial condition so that a diffuse interface of thickness $O(\delta)$ persists for short times.

\begin{thm}[Mullins--Sekerka limit]\label{thm4.2}
In the limit $\delta \to 0$, the Cahn--Hilliard system converges formally to
the Mullins--Sekerka problem
\begin{equation}\label{nonlocalV}
  \left\{
  \begin{aligned}
    &\Delta \mu = 0, \quad &&\text{in } \Omega_G(t) \text{ and }\Omega_L(t),\\
    &\pt_n \mu =0, \quad &&\text{on } \pt\Omega,\\
    &\mu_L = \mu_G = -\frac{\sqrt{2}}{3}\,\sigma_{\nlg}\,H,
      \quad &&\text{on } S_t, \\
    &2\xi V = \partial_n \mu_L - \partial_n \mu_G,
      \quad &&\text{on } S_t,
  \end{aligned}
  \right.
\end{equation}
where the chemical potential is harmonic in each bulk phase, and the Gibbs--Thomson relation
$
\mu_L = \mu_G = -\frac{\sqrt{2}}{3}\,\sigma_{\nlg}\,H
$
holds on the interface. The mean curvature $H$ is defined so that a sphere of
radius $R$ with outward unit normal has $H=2/R>0$. The normal derivatives in
the velocity law are taken in the direction $n$ pointing from liquid to gas.
\end{thm}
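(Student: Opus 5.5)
The plan is a formal matched asymptotic expansion in the spirit of Pego~\cite{Pego89}, carried out separately in an outer region away from $S_t$ and an inner layer of width $O(\delta)$ around $S_t$, and then matched. I expand both unknowns in powers of $\delta$: $\phi=\phi_0+\delta\phi_1+\dots$ and $\mu=\mu_0+\delta\mu_1+\dots$. The normal derivatives and the jump in the last line of \eqref{nonlocalV} are taken along $n$, pointing from liquid to gas, as in the statement.

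\textbf{Outer region.} Multiply the definition \eqref{eq:chem} of $\mu$ by $\delta$ and keep the leading order. This gives $F'(\phi_0)=0$, and I select the stable roots $\phi_0=-1$ in $\Omega_L(t)$ and $\phi_0=+1$ in $\Omega_G(t)$. Then $\partial_t\phi_0=0$, so the leading order of $\xi\partial_t\phi=\Delta\mu$ gives $\Delta\mu_0=0$ in each phase. The condition $\partial_n\mu=0$ on $\partial\Omega$ in \eqref{eq:CH_full} passes directly to $\partial_n\mu_0=0$. This accounts for the first two lines of \eqref{nonlocalV}.

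\textbf{Inner region.} I use the stretched variable $r=d/\delta$ together with the identities $\nabla d=n$, $\partial_t d=-V$, $\Delta d=H$ from Section~\ref{ssec:AC_SIL}. I write $\phi=\Phi_0(r)+\delta\Phi_1+\dots$ and $\mu=M_0+\delta M_1+\dots$, and substitute as in \eqref{eq:inner_t}--\eqref{eq:inner_lap}.

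\begin{enumerate}[(i)]
\item The $O(\delta^{-2})$ part of $\xi\partial_t\phi=\Delta\mu$ gives $\partial_{rr}M_0=0$. Boundedness for matching then forces $M_0$ to be independent of $r$.
\item The $O(\delta^{-1})$ part of \eqref{eq:chem} gives $\Phi_0''=F'(\Phi_0)$, so $\Phi_0$ is the $\tanh$ profile \eqref{eq:tanh}.
\item The $O(1)$ part of \eqref{eq:chem} reads $M_0=\sigma_{\nlg}\bigl(-\Phi_1''+F''(\Phi_0)\Phi_1-H\Phi_0'\bigr)$. The operator $-\partial_{rr}+F''(\Phi_0)$ is self-adjoint with kernel spanned by $\Phi_0'$. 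Multiplying by $\Phi_0'$ and integrating over $\bR$ gives the solvability condition
\[
2M_0=-\sigma_{\nlg}H\int_\bR(\Phi_0')^2\,\ud r .
\]
\item By the first integral \eqref{eq:first_integral}, $\int_\bR(\Phi_0')^2\,\ud r=\int_{-1}^1\tfrac{1}{\sqrt2}(1-\Phi^2)\,\ud\Phi=\tfrac{2\sqrt2}{3}$. Hence $M_0=-\tfrac{\sqrt2}{3}\sigma_{\nlg}H$. Matching $M_0$ to the limits of $\mu_0$ from both sides yields the Gibbs--Thomson condition $\mu_L=\mu_G=-\tfrac{\sqrt2}{3}\sigma_{\nlg}H$, the third line of \eqref{nonlocalV}.
\item The $O(\delta^{-1})$ part of $\xi\partial_t\phi=\Delta\mu$ is $-\xi V\Phi_0'=\partial_{rr}M_1+H\partial_rM_0$, and the last term vanishes by (i). Integrating over $r\in\bR$ gives $-2\xi V=[\partial_rM_1]_{-\infty}^{+\infty}$.
\item The matching rule $\partial_rM_1(\pm\infty)=\nabla\mu_0^{G/L}\cdot n$ turns this into $2\xi V=\partial_n\mu_L-\partial_n\mu_G$, the last line of \eqref{nonlocalV}.
\end{enumerate}

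\textbf{Main obstacle.} The delicate step is justifying the matching conditions and the solvability argument. In particular one must check that $\mu$ has no $O(\delta^{-1})$ component, which is consistent with $\mu$ being $O(1)$ in the outer region for well-prepared data. One must also check that $\Phi_1$ grows at most polynomially in $r$, so that the integration by parts in the solvability condition is legitimate.

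\textbf{Near the substrate.} A secondary issue is the corner where $S_t$ meets $\Gamma$. The dynamic condition on $\Gamma$ involves only $\phi$ and is treated exactly as in Section~\ref{sssec:CL_vel}. The bulk Neumann condition $\partial_n\mu=0$ on $\Gamma$ enters only through the outer problem, so I would assume that no additional boundary layer for $\mu$ arises there at the orders used above. This assumption is the one I would flag as formal.
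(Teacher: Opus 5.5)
Your proposal is correct and follows the same route as the paper's proof. The outer expansion forces $\phi_0=\pm1$ and a harmonic $\mu_0$. The inner layer gives the $\tanh$ profile, and solvability against $\Phi'$ with $\int\Phi'\,\ud r=2$ and $\int(\Phi')^2\,\ud r=\tfrac{2\sqrt2}{3}$ gives Gibbs--Thomson. Integrating $\xi\partial_t\phi=\Delta\mu$ across the layer gives $2\xi V=\partial_n\mu_L-\partial_n\mu_G$. Your version spells out the $O(\delta^{-2})$ step $\partial_{rr}M_0=0$ and the matching $\partial_rM_1(\pm\infty)=\partial_n\mu_{G/L}$, which the paper compresses into ``integrating across a thin tubular neighborhood,'' but the argument is the same.
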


\begin{rem}
Formula \eqref{nonlocalV} agrees with equation (1.4) in
\cite{AlikakosBatesChen} after the change of variable $v=-\mu$. The sign
difference from equations (2.6) and (5.12) in \cite{Pego89} comes from the
definition of the normal velocity $V$.
\end{rem}

\begin{proof}
First, using asymptotic expansion, we compute the \emph{outer expansion} in the
bulk regions.

Away from the interface, we expand
\[
\phi = \phi_0 + \delta \phi_1 + \cdots,
\qquad
\mu = \mu_0 + \delta \mu_1 + \cdots.
\]

The singular term in the chemical potential forces the well-prepared outer
states into the wells of the potential:
\[
\phi_0 = \pm 1.
\]
Substituting into \eqref{eq:CH_full}, we obtain
\[
\Delta \mu_0 = 0,
\]
so $\mu_0$ is harmonic in each phase.
This means the bulk dynamics are quasi-static diffusion, with all nontrivial dynamics localized at the interface.

Next, we compute the \emph{inner expansion} in the interfacial layer.

Near the interface $S_t$, we introduce the signed distance function $d(x,t)$ and the stretched variable
\[
r = \frac{d(x,t)}{\delta}.
\]
We seek an inner approximation
\[
\phi(x,t) \approx \Phi(r),
\]
where $\Phi$ is the transition profile.

Recall the geometric identities on $S_t$ give
\begin{equation}\label{eq:GeoId}
\nabla d = n, \quad
\partial_t d = -V, \quad
\Delta d = H.  
\end{equation}
Thus,
\begin{align}
\partial_t \phi &\approx -\frac{1}{\delta} \Phi'(r) V, \\
\nabla \phi &\approx \frac{1}{\delta} \Phi'(r) n, \\
\Delta \phi &\approx \frac{1}{\delta^2} \Phi''(r) + \frac{1}{\delta} \Phi'(r) H.
\end{align}

The chemical potential \eqref{eq:chem} becomes
\begin{equation}
\mu \approx -\sigma_{\nlg}
\left[
\frac{1}{\delta}(\Phi'' - F'(\Phi)) + \Phi' H
\right].
\end{equation}

\textbf{$O(\delta^{-1})$ balance (leading order):}
At order $O(\delta^{-1})$ in the chemical potential, we obtain
\begin{equation}\label{eq:phi}
\Phi'' - F'(\Phi) = 0,  
\end{equation}
whose unique monotone solution is
\[
\Phi(r) = \tanh\!\left(\frac{r}{\sqrt{2}}\right).
\]

\textbf{$O(1)$ balance (sub-leading order):}

At the next order, 
\[
\phi(x,t) \approx \Phi(r)+ \delta U_1(r),
\]
and we obtain a linearized problem for $U_1(r)$:
\begin{equation}\label{eq:U1}
\mathcal{L} U_1 = f(r),
\qquad
\mathcal{L} = -\partial_{rr} + F''(\Phi),\,\, f(r) = H \Phi'(r) + \frac{\mu_0}{\sigma_{\nlg}}.
\end{equation}
 
From \eqref{eq:phi}, we have $\mathcal{L}\Phi' = 0$. Solvability for \eqref{eq:U1} requires
\[
\int_{-\infty}^{\infty} f(r)\Phi'(r)\,dr = 0,
\]
which gives
\[
H \int_{-\infty}^{\infty} (\Phi')^2 dr
+ \frac{\mu_0}{\sigma_{\nlg}}
\int_{-\infty}^{\infty} \Phi' dr = 0.
\]
Using
\[
\int_{-\infty}^{\infty} \Phi' dr = 2, 
\qquad 
\int_{-\infty}^{\infty} (\Phi')^2 dr = \frac{2\sqrt{2}}{3},
\]
we conclude
\begin{equation}
\mu_0 =  -\frac{\sqrt{2}}{3}\,\sigma_{\nlg} H.
\end{equation}
Thus curvature shifts the chemical potential, producing the Gibbs--Thomson
effect.

\paragraph{Interface velocity law.}

The Cahn--Hilliard equation reads
\[
\partial_t \phi = \frac{1}{\xi} \Delta \mu.
\]

In the inner region,
\[
\partial_t \phi \approx -\frac{1}{\delta} \Phi'(r) V,
\qquad
\Delta \mu \approx \partial_{nn}\mu.
\]
Integrating the equation $\xi\partial_t\phi=\Delta\mu$ across a thin tubular
neighborhood from the liquid side to the gas side gives
\[
\xi\int_{-\infty}^{\infty}
\left(-\frac{1}{\delta}\Phi'(r)V\right)\delta\,\ud r
= \partial_n\mu_G-\partial_n\mu_L.
\]
Since $\int_{-\infty}^{\infty}\Phi'\,\ud r=2$, we obtain
\begin{equation}
2\xi V = \partial_n\mu_L - \partial_n\mu_G.
\end{equation}
This means the interface moves by diffusive flux imbalance across it.

\end{proof}

\subsubsection{Contact line dynamics}

The analysis near the contact line follows the same structure as in the
Allen--Cahn case in Section~\ref{sssec:CL_vel}.

The boundary condition reduces to
\[
\zeta v_{\cl}
=
\frac{\sigma_{\nlg}(\cos\theta_Y - \cos\theta_{\cl})}
{\sin\theta_{\cl}},
\quad \text{on } \partial D_t.
\]
As in Remark~\ref{remarkS}, absorbing the factor $\sin\theta_{\cl}$ into
$\zeta$ gives
\[
\zeta v_{\cl}
=
\sigma_{\nlg}(\cos\theta_Y - \cos\theta_{\cl}),
\quad \text{on } \partial D_t.
\]
\subsubsection{The Mullins--Sekerka problem with contact line dynamics}

Based on the preceding asymptotic analysis and the contact-line calculation, we
summarize the sharp-interface Mullins--Sekerka problem with moving contact
line:
\begin{equation}\label{MScl}
  \left\{
  \begin{aligned}
    &\Delta \mu_G = 0, \quad &&\text{in } \Omega_G(t),\\
    &\Delta \mu_L = 0, \quad &&\text{in } \Omega_L(t), \\
    &\pt_n \mu =0, \quad &&\text{on } \pt\Omega,\\
    &\mu_L = \mu_G = -\frac{\sqrt{2}}{3}\,\sigma_{\nlg}\,H,
      \quad &&\text{on } S_t, \\
    &2\xi V = \partial_n \mu_L - \partial_n \mu_G,
      \quad &&\text{on } S_t, \\
    &\zeta v_{\cl}
      = \sigma_{\nlg}(\cos\theta_Y - \cos\theta_{\cl}),
      \quad &&\text{on } \partial D_t,
      \\
      &\pt S_t = \pt D_t.
  \end{aligned}
  \right.
\end{equation}
We impose that the capillary surface is attached to the substrate,
$\pt S_t = \pt D_t$.

The contact-line equation describes the motion of $\partial D_t$, where
$v_{\cl}$ is the contact line velocity and $\theta_{\cl}$ is the dynamic
contact angle. It reflects the balance between capillary forces and
contact-line dissipation.

\subsection{Variational formulation of the Mullins--Sekerka problem}

We now present a variational interpretation of the Mullins--Sekerka dynamics
coupled with the moving contact line.

Recall that the rate of change of the free energy is given by
\begin{equation}
  \frac{\ud E}{\ud t}
  = \sigma_{\nlg} \int_{S_t} H V \,\ud \cH^2
  + \sigma_{\nlg} \int_{\partial D_t}
    (\cos\theta_{\cl} - \cos\theta_Y)\, v_{\cl} \,\ud \cH^1.
\end{equation}

We next define the dissipation functional through an auxiliary problem for
$\mu$. For a given interface $S_t$ and normal velocity $V$, we introduce a
chemical potential $\mu$ defined on $\Omega_G(t)\cup\Omega_L(t)$ as the
solution to
\begin{equation}\label{tm415mu}
\begin{cases}
\Delta \mu = 0, & \text{in } \Omega_G(t)\cup\Omega_L(t), \\
\partial_n \mu = 0, & \text{on } \partial \Omega, \\
\pt_n \mu_L - \pt_n \mu_G = 2\xi V, & \text{on } S_t, \\
[\mu] = 0, & \text{on } S_t.
\end{cases}
\end{equation}

Define the dissipation functional associated with the Mullins--Sekerka dynamics as
\begin{equation}
  {\mathcal D}_{\text{MS}}
  = \frac{\beta}2 \int_{\Omega_G(t)\cup\Omega_L(t)} |\nabla \mu|^2 \,\ud x
  + \frac{\zeta}2 \int_{\partial D_t} v_{\cl}^2 \,\ud \cH^1,
\end{equation}
where the mobility constant $\beta$ is to be determined.
The first term represents $H^{-1}$-type bulk diffusion dissipation in terms of the chemical potential $\mu$, while the second term accounts for dissipation localized at the contact line. 

Below, we represent the $H^{-1}$-type bulk dissipation as a dual pairing of the
chemical potential and the normal velocity, where $\mu$ is solved from
\eqref{tm415mu}. Using Green's identity, $\Delta \mu=0$, and the transmission
conditions, we obtain
\begin{equation}
\int_{\Omega_G(t)\cup\Omega_L(t)} |\nabla \mu|^2 \,\ud x
= \int_{S_t} (\mu_L \partial_n \mu_L-\mu_G \partial_n \mu_G) \,\ud \cH^2
=  2\xi \int_{S_t} \mu V \,\ud \cH^2.
\end{equation}
This identity connects the bulk dissipation to the interfacial motion.

We determine the evolution by minimizing the functional
\[
\frac{\ud E}{\ud t} +  {\mathcal D}_{\text{MS}}
\]
with respect to the interface velocity $V$ and the contact line velocity
$v_{\cl}$. Taking variations in $V$ and $v_{\cl}$ yields
\begin{equation}\label{tm418}
    \begin{aligned}
&\sigma_{\nlg} H = -2 \xi \beta \mu,
\quad \text{on } S_t,\\
&\zeta v_{\cl}
= \sigma_{\nlg}(\cos\theta_Y - \cos\theta_{\cl}),
\quad \text{on } \partial D_t.
\end{aligned}
\end{equation}

Finally, we choose the mobility in the dissipation functional to recover the
same coefficients as in the sharp-interface limit of the Cahn--Hilliard model.
Choosing
\[
\beta = \frac{3}{2\sqrt{2}\,\xi}
\]
recovers the Gibbs--Thomson relation
\[
\mu = -\frac{\sqrt{2}}{3}\,\sigma_{\nlg}\,H.
\]
Combining \eqref{tm415mu} and \eqref{tm418} yields the full sharp-interface
Mullins--Sekerka system with moving contact line \eqref{MScl}.

 \section{Numerical Schemes and Results}
\label{sec:numerics}

We now construct numerical schemes that preserve the variational structure of
the phase-field models.  Recall from Section~\ref{ssec:pf_energy} that the
total free energy is
\begin{equation}\label{eq:E_num}
  E(\phi)=E_b(\phi)+E_w(\phi)
  =
  \frac{3\sqrt2}{4}\sigma_{\nlg}
  \int_\Omega
  \left[\frac{\delta}{2}|\nabla\phi|^2+\frac{1}{\delta}F(\phi)\right]
  \,\ud^3 x
  +(\sigma_{\nsl}-\sigma_{\nsg})\int_\Gamma f_w(\phi)\,\ud^2 x .
\end{equation}
To handle numerical values of $\phi$ outside $[-1,1]$, we use the following
$C^2$ extension of the wall energy density in \eqref{eq:E_num}:
\begin{equation}\label{eq:f_extended}
  f_w(\phi)
  =
  \begin{cases}
    \displaystyle \frac{3}{4}(\phi-1)^2,
    & \phi>1, \\[0.4em]
    \displaystyle \frac{\phi^3-3\phi}{4}+ \frac{1}{2},
    & -1\le\phi\le 1, \\[0.4em]
    \displaystyle 1-\frac{3}{4}(\phi+1)^2,
    & \phi<-1 .
  \end{cases}
\end{equation}
This extension agrees with \eqref{eq:GLw} on $[-1,1]$ and satisfies
$|f_w''(\phi)|\le 3/2$ for all $\phi\in\bR$.  The double-well potential needs
no analogous extension, since $F(\phi)=\frac14(\phi^2-1)^2$ is already defined
on all of $\bR$ and satisfies $F''(\phi)\ge -1$.

The discretization is introduced in two steps.  We first apply a minimizing
movement scheme in time using \eqref{eq:E_num} and the Onsager dissipation
functionals from Sections~\ref{sec:AC} and~\ref{sec:CH}.  We then discretize
the resulting variational problems in space by conforming finite elements.
Thus the Allen--Cahn scheme is the minimizing movement scheme for
\eqref{eq:Ray_AC}, while the Cahn--Hilliard scheme is the minimizing movement
scheme for \eqref{Ray1}.

\subsection{Minimizing movement (JKO) scheme}

Let $t_n=n\Delta t$.  Given $\phi^{n-1}$, the Allen--Cahn update is obtained by
minimizing the free energy plus the time-discrete Allen--Cahn dissipation:
\begin{equation}\label{eq:AC_JKO}
  \phi^n
  =
  \argmin_{\substack{\phi\in H^1(\Omega)\\
  \int_\Omega\phi\,\ud^3 x=\int_\Omega\phi^{n-1}\,\ud^3 x}}
  \left\{
  E(\phi)
  +\Delta t\,{\mathcal D}_{\text{AC}}\!\left(
  \frac{\phi-\phi^{n-1}}{\Delta t},
  \left.\frac{\phi-\phi^{n-1}}{\Delta t}\right|_\Gamma
  \right)
  \right\}.
\end{equation}
The constraint is the time-discrete form of the conservative Allen--Cahn volume
constraint.  If the initial data satisfy
$\int_\Omega\phi^0\,\ud^3 x=|\Omega|-2\operatorname{Vol}$, then
\eqref{eq:AC_JKO} preserves this value at every time step.

The Cahn--Hilliard minimizing movement step is
\begin{equation}\label{eq:CH_JKO}
  \phi^n
  =
  \argmin_{\phi\in H^1(\Omega)}
  \left\{
  E(\phi)
  +\Delta t\,{\mathcal D}_{\text{CH}}\!\left(
  \frac{\phi-\phi^{n-1}}{\Delta t},
  \left.\frac{\phi-\phi^{n-1}}{\Delta t}\right|_\Gamma
  \right)
  \right\}.
\end{equation}

\begin{thm}[Energy stability and conditional well-posedness]
\label{thm:JKO_energy}
The minimizers of \eqref{eq:AC_JKO} and \eqref{eq:CH_JKO}, whenever they
exist, satisfy
\begin{align}
  E(\phi^n)
  &+\Delta t\,{\mathcal D}_{\text{AC}}\!\left(
  \frac{\phi^n-\phi^{n-1}}{\Delta t},
  \left.\frac{\phi^n-\phi^{n-1}}{\Delta t}\right|_\Gamma
  \right)
  \le E(\phi^{n-1})
  \label{eq:AC_energy_law}\\
  E(\phi^n)
  &+\Delta t\,{\mathcal D}_{\text{CH}}\!\left(
  \frac{\phi^n-\phi^{n-1}}{\Delta t},
  \left.\frac{\phi^n-\phi^{n-1}}{\Delta t}\right|_\Gamma
  \right)
  \le E(\phi^{n-1}).
  \label{eq:CH_energy_law}
\end{align}
In addition, consider the Allen--Cahn minimizing movement restricted to a
finite-dimensional space, with the same mass constraint as in
\eqref{eq:AC_JKO}.  With the extended wall energy density
\eqref{eq:f_extended}, if
\begin{equation}\label{eq:AC_convex_dt}
  \Delta t
  \le
  \min\!\left\{
  \frac{\xi\delta^2}{\sigma_{\nlg}},
  \frac{\zeta\delta}{\sqrt2|\sigma_{\nsl}-\sigma_{\nsg}|}
  \right\},
\end{equation}
with the second restriction omitted when $\sigma_{\nsl}=\sigma_{\nsg}$, then
the Allen--Cahn objective is strictly convex on the admissible affine space.
Consequently the discrete Allen--Cahn minimizing movement has a unique
minimizer.

Similarly, the finite-dimensional Cahn--Hilliard minimizing movement, with the
$H^{-1}$ term acting on mean-zero increments, is strictly convex on the
admissible affine space if
\begin{equation}\label{eq:CH_convex_dt}
  \Delta t
  <
  \min\!\left\{
  \frac{4\xi\delta^3}{\sigma_{\nlg}},
  \frac{\zeta\delta}{\sqrt2|\sigma_{\nsl}-\sigma_{\nsg}|}
  \right\},
\end{equation}
again omitting the second restriction when $\sigma_{\nsl}=\sigma_{\nsg}$.
Consequently the discrete Cahn--Hilliard minimizing movement has a unique
minimizer.
\end{thm}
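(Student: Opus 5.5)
The energy inequalities \eqref{eq:AC_energy_law}--\eqref{eq:CH_energy_law} follow by comparing the minimizer with the previous iterate. In \eqref{eq:AC_JKO} the previous iterate $\phi=\phi^{n-1}$ is admissible: it lies in $H^1(\Omega)$ and satisfies the mass constraint trivially. In \eqref{eq:CH_JKO} it is admissible because its increment is zero, hence mean-zero, so the $H^{-1}$ term is defined. For this competitor both dissipation terms vanish. Since $\phi^n$ minimizes the objective, its objective value is at most $E(\phi^{n-1})$, which is exactly the stated inequality. No convexity is needed for this part.

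For the convexity statements I work on the finite-dimensional admissible affine space $\phi^{n-1}+W$. For Allen--Cahn, $W$ consists of the mean-zero functions in the discrete space. For Cahn--Hilliard, $W$ consists of the mean-zero increments on which $(-\Delta_N)^{-1}$ acts. I compute the second variation of the objective $J$ in a direction $w\in W$. Writing $c:=\sigma_{\nsl}-\sigma_{\nsg}$, the Allen--Cahn second variation is
\[
\frac{3\sqrt2}{4}\int_\Omega\Big[\sigma_{\nlg}\delta|\nabla w|^2+\Big(\frac{\sigma_{\nlg}}{\delta}F''(\phi)+\frac{\xi\delta}{\Delta t}\Big)w^2\Big]\ud^3x
+\int_\Gamma\Big(c f_w''(\phi)+\frac{3\sqrt2}{4}\frac{\zeta\delta}{\Delta t}\Big)w^2\,\ud^2x .
\]
The extended $f_w$ is only $C^1$ with piecewise-smooth second derivative, so I would avoid the Hessian on $\Gamma$ and argue with monotonicity instead. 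Since $|f_w''|\le 3/2$ a.e., the function $s\mapsto c f_w(s)+\frac34|c|s^2$ has a nondecreasing derivative and is therefore convex. Likewise $F+\frac12 s^2$ is convex, because $F''\ge-1$. The first step of \eqref{eq:AC_convex_dt} gives $\xi\delta/\Delta t\ge\sigma_{\nlg}/\delta$, so the bulk potential together with the bulk dissipation is convex. The second step gives $\frac{3\sqrt2}{4}\zeta\delta/\Delta t\ge\frac32|c|$, so the wall term together with the substrate dissipation is convex. This leaves the strictly convex gradient term $\frac{3\sqrt2}{4}\sigma_{\nlg}\frac{\delta}{2}\int|\nabla w|^2$. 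It vanishes only for constant $w$, and the mean-zero constraint forces such a $w$ to be zero. This is why non-strict inequalities in \eqref{eq:AC_convex_dt} still yield strict convexity.

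For Cahn--Hilliard the bulk dissipation is $\frac{3\sqrt2}{4}\frac{\xi}{2\Delta t}\|w\|_{-1}^2$, where $\|w\|_{-1}^2=\int w(-\Delta_N)^{-1}w$. The main obstacle is to dominate the negative contribution $-\frac{\sigma_{\nlg}}{2\delta}\|w\|^2$ coming from $F$. I would use the interpolation bound obtained from $\|w\|^2=\int\nabla w\cdot\nabla\psi$ with $\psi=(-\Delta_N)^{-1}w$, which gives $\|w\|^2\le\|\nabla w\|\,\|w\|_{-1}\le\frac a2\|\nabla w\|^2+\frac1{2a}\|w\|_{-1}^2$. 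Choosing $a=2\delta^2$ makes the gradient part equal to $\sigma_{\nlg}\delta\|\nabla w\|^2/2$, which is exactly the available gradient energy. The remaining requirement is $\xi/\Delta t>\sigma_{\nlg}/(4\delta^3)$, which is the first condition in \eqref{eq:CH_convex_dt}. I take it strict so that a positive multiple of $\|w\|_{-1}^2$ survives and gives strict convexity. The boundary term is handled exactly as in the Allen--Cahn case.

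Existence and uniqueness then follow by a standard argument. On the finite-dimensional affine space $J$ is continuous and strictly convex. It is also coercive: the quartic growth of $F$ and the gradient term dominate, and the wall term grows at most quadratically by \eqref{eq:f_extended} and is absorbed by the dissipation under the same step restriction. Hence $J$ has a minimizer, and strict convexity makes it unique.
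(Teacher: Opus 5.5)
Your proposal is correct and takes essentially the same approach as the paper. The energy laws come from testing with $\phi^{n-1}$, convexity comes from $F''\ge-1$ and $|f_w''|\le\frac32$ with the mean-zero constraint excluding constants from the gradient term, and the Cahn--Hilliard threshold $4\xi\delta^3/\sigma_{\nlg}$ comes from the interpolation $\|w\|^2\le\|\nabla w\|\,\|w\|_{-1}$; your Young's inequality with $a=2\delta^2$ is just the paper's positivity check made explicit. One small correction: the extension \eqref{eq:f_extended} is in fact $C^2$, since the one-sided second derivatives agree at $\phi=\pm1$ with $f_w''(\pm1)=\pm\frac32$, so the paper's second-variation computation is legitimate, although your convexity-by-monotonicity argument works equally well.
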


\begin{proof}
For the energy laws, the previous state $\phi^{n-1}$ belongs to the admissible
class of each minimizing movement.  Evaluating the objective at
$\phi^{n-1}$ gives the right-hand side, and minimality gives
\eqref{eq:AC_energy_law} and \eqref{eq:CH_energy_law}.

Next we justify the convexity assertion for the Allen--Cahn step.  Let
\[
  J_{\text{AC}}(\phi;\phi^{n-1})
  :=
  E(\phi)
  +\Delta t\,{\mathcal D}_{\text{AC}}\!\left(
  \frac{\phi-\phi^{n-1}}{\Delta t},
  \left.\frac{\phi-\phi^{n-1}}{\Delta t}\right|_\Gamma
  \right),
\]
restricted to the finite-dimensional space.  For an admissible variation
$v_h$ satisfying $(v_h,1)_\Omega=0$, its second variation is
\begin{align*}
  J_{\text{AC}}''(\phi;\phi^{n-1})[v_h,v_h]
  &=
  \frac{3\sqrt2}{4}\sigma_{\nlg}\delta
  \|\nabla v_h\|_{L^2(\Omega)}^2 \\
  &\quad
  +\frac{3\sqrt2}{4}
  \int_\Omega
  \left[
  \frac{\sigma_{\nlg}}{\delta}F''(\phi)
  +\frac{\xi\delta}{\Delta t}
  \right]v_h^2\,\ud^3 x \\
  &\quad
  +\int_\Gamma
  \left[
  (\sigma_{\nsl}-\sigma_{\nsg})f_w''(\phi)
  +\frac{3\sqrt2}{4}\frac{\zeta\delta}{\Delta t}
  \right]v_h^2\,\ud^2 x .
\end{align*}
Since $F''(\phi)=3\phi^2-1\ge -1$ and the extended wall energy satisfies
$|f_w''|\le 3/2$, the bulk and boundary zeroth-order coefficients are
nonnegative under \eqref{eq:AC_convex_dt}.  The
remaining gradient term is strictly positive for every nonzero admissible
variation, because the mass constraint excludes nonzero constants.  Hence
$J_{\text{AC}}$ is strictly convex on the admissible affine space.  Coercivity
follows from the quartic bulk energy, the gradient term, and the quadratic
movement cost.  Therefore the finite-dimensional constrained minimization
problem has a unique minimizer by the direct method.

For the Cahn--Hilliard step, the only new point is the bulk metric.  For a
mean-zero admissible variation $v_h$, the bulk part of the second variation is
bounded from below by
\[
  \frac{3\sqrt2}{4}
  \left[
  \sigma_{\nlg}\delta\|\nabla v_h\|_{L^2(\Omega)}^2
  -\frac{\sigma_{\nlg}}{\delta}\|v_h\|_{L^2(\Omega)}^2
  +\frac{\xi}{\Delta t}\|v_h\|_{H^{-1}_N(\Omega)}^2
  \right].
\]
Using the standard Neumann interpolation estimate
$\|v_h\|_{L^2(\Omega)}^2
\le \|\nabla v_h\|_{L^2(\Omega)}\|v_h\|_{H^{-1}_N(\Omega)}$,
this lower bound is positive for every nonzero $v_h$ if
$\Delta t<4\xi\delta^3/\sigma_{\nlg}$.  The boundary part is nonnegative under
the same wall restriction as above, because $|f_w''|\le3/2$.  Thus the
Cahn--Hilliard objective is strictly convex.  Coercivity and finite-dimensional
compactness then give existence, and strict convexity gives uniqueness.
\end{proof}

\begin{rem}[Convex-splitting variant]\label{rem:convex_splitting}
An unconditionally solvable variant is obtained by replacing $E(\phi)$ in the
minimizing movement objectives by a convex-splitting linearization
$E_{\rm cs}(\phi;\phi^{n-1})$.  For the double-well potential, write
\[
  F(\phi)=F_c(\phi)+F_e(\phi),
  \qquad
  F_c(\phi)=\frac14\phi^4+\frac14,\qquad
  F_e(\phi)=-\frac12\phi^2 .
\]
The concave part is then replaced by its tangent at the previous time step:
\[
  F(\phi)
  \quad\leadsto\quad
  F_c(\phi)+F_e(\phi^{n-1})
  +F_e'(\phi^{n-1})(\phi-\phi^{n-1}) .
\]
For the wall energy, one may apply the same idea to
$g(\phi)=(\sigma_{\nsl}-\sigma_{\nsg})f_w(\phi)$.  Since
$|f_w''|\le3/2$, choose
$L_\Gamma\ge \frac32|\sigma_{\nsl}-\sigma_{\nsg}|$ and write
\[
  g(\phi)=\left(g(\phi)+\frac{L_\Gamma}{2}\phi^2\right)
  -\frac{L_\Gamma}{2}\phi^2 .
\]
Linearizing the last concave quadratic gives a functional
$E_{\rm cs}(\phi;\phi^{n-1})$ that is convex in $\phi$ and agrees with
$E(\phi)$ at $\phi=\phi^{n-1}$.  The corresponding finite-dimensional
minimizing movement objective is therefore strictly convex for every
$\Delta t>0$, because the quadratic movement term and the gradient term exclude
nonzero null directions on the admissible mass class.  Hence the convex-split
scheme has a unique minimizer without a time-step restriction.  In the
computations reported below, however, we use the minimizing movement schemes
with the original energy $E(\phi)$.
\end{rem}

\subsection{Finite element discretization}

Let $\mathcal T_h$ be a shape-regular mesh of $\Omega$ that resolves
the substrate $\Gamma$.  For the two-dimensional computations below, we use a
uniform rectangular mesh and the $H^1$-conforming tensor-product finite element
space
\[
  V_h=\{v_h\in C^0(\overline\Omega):v_h|_K\in\mathbb Q_k(K)
  \text{ for }K\in\mathcal T_h\}\subset H^1(\Omega),
\]
where $\mathbb Q_k(K)$ is the space of polynomials of degree at most $k$ in
each variable on the element $K$.  The three-dimensional computations use the
analogous tensor-product space on hexahedral elements.  We also define the
mean-zero subspace
\[
  V_{h,0}:=\{v_h\in V_h:(v_h,1)_\Omega=0\}.
\]

We write $(u,v)_\Omega=\int_\Omega uv\,\ud x$ and
$\langle u,v\rangle_\Gamma=\int_\Gamma uv\,\ud s$.  The finite element energy
$E_h$ is obtained by restricting the original energy $E$ to $V_h$ and
evaluating the integrals by the quadrature used in the computation.  For a
finite element rate $r_h\in V_h$, set
\[
  {\mathcal D}_{\text{AC},h}(r_h,r_h|_\Gamma)
  =
  \frac{3\sqrt2}{4}
  \left[
  \frac{\xi\delta}{2}\|r_h\|_{L^2(\Omega)}^2
  +\frac{\zeta\delta}{2}\|r_h\|_{L^2(\Gamma)}^2
  \right].
\]

The finite element Allen--Cahn minimizing movement is
\begin{equation}\label{eq:AC_JKO_h}
  \phi_h^n
  =
  \argmin_{\substack{\phi_h\in V_h\\
  (\phi_h,1)_\Omega=(\phi_h^{n-1},1)_\Omega}}
  \left\{
  E_h(\phi_h)
  +\Delta t\,{\mathcal D}_{\text{AC},h}\!\left(
  \frac{\phi_h-\phi_h^{n-1}}{\Delta t},
  \left.\frac{\phi_h-\phi_h^{n-1}}{\Delta t}\right|_\Gamma
  \right)
  \right\}.
\end{equation}
Introducing the Lagrange multiplier $\Lambda_h^n\in\bR$ and using the
definitions of $E_h$ and ${\mathcal D}_{\text{AC},h}$, we obtain the following
discrete Euler--Lagrange equations: find
$(\phi_h^n,\Lambda_h^n)\in V_h\times\bR$ such that
\begin{subequations}\label{eq:AC_FEM}
\begin{align}
  &\left(\frac{\xi\delta(\phi_h^n-\phi_h^{n-1})}{\Delta t},\psi_h\right)_\Omega
  +\sigma_{\nlg}\delta(\nabla\phi_h^n,\nabla\psi_h)_\Omega
  +\left(\frac{\sigma_{\nlg}}{\delta}F'(\phi_h^n),\psi_h\right)_\Omega
  -(\Lambda_h^n,\psi_h)_\Omega \notag\\
  &\quad
  +\left\langle
  \frac{\zeta\delta(\phi_h^n-\phi_h^{n-1})}{\Delta t}
  +\frac{4}{3\sqrt2}(\sigma_{\nsl}-\sigma_{\nsg})f_w'(\phi_h^n),
  \psi_h
  \right\rangle_\Gamma
  =0,
  \qquad \forall \psi_h\in V_h, \label{eq:AC_FEM_a}\\
  &(\phi_h^n,1)_\Omega=(\phi_h^{n-1},1)_\Omega. \label{eq:AC_FEM_b}
\end{align}
\end{subequations}
This is a backward-Euler variational discretization of \eqref{eq:AC}.  In
particular, the boundary contribution in \eqref{eq:AC_FEM_a} is the weak form
of the dynamic boundary condition
\[
  \zeta\delta\frac{\phi_h^n-\phi_h^{n-1}}{\Delta t}
  =
  -\sigma_{\nlg}\delta\,\partial_n\phi_h^n
  -\frac{4}{3\sqrt2}(\sigma_{\nsl}-\sigma_{\nsg})f_w'(\phi_h^n)
  \quad\text{on }\Gamma.
\]

For the Cahn--Hilliard scheme, define the discrete Neumann $H^{-1}$ norm for
mean-zero $r_h\in V_{h,0}$ by
\begin{equation}\label{eq:discrete_Hminus1}
  \|r_h\|_{H^{-1}_{N,h}(\Omega)}^2
  :=
  (\nabla w_h,\nabla w_h)_\Omega,
  \qquad \text{where } w_h\in V_{h,0}\text{ solves }
  (\nabla w_h,\nabla\eta_h)_\Omega=(r_h,\eta_h)_\Omega,
  \quad \forall \eta_h\in V_{h,0}.
\end{equation}
The finite element Cahn--Hilliard dissipation is
obtained from \eqref{eq:diss_CH_full} by replacing the Neumann inverse with the
discrete Neumann inverse: for $r_h\in V_{h,0}$,
\[
  {\mathcal D}_{\text{CH},h}(r_h,r_h|_\Gamma)
  =
  \frac{3\sqrt2}{4}
  \left[
  \frac{\xi}{2}\|r_h\|_{H^{-1}_{N,h}(\Omega)}^2
  +\frac{\zeta\delta}{2}\|r_h\|_{L^2(\Gamma)}^2
  \right].
\]
The discrete Cahn--Hilliard minimizing movement is
\begin{equation}\label{eq:CH_JKO_h}
  \phi_h^n
  =
  \argmin_{\phi_h\in V_h}
  \left\{
  E_h(\phi_h)
  +\Delta t\,{\mathcal D}_{\text{CH},h}\!\left(
  \frac{\phi_h-\phi_h^{n-1}}{\Delta t},
  \left.\frac{\phi_h-\phi_h^{n-1}}{\Delta t}\right|_\Gamma
  \right)
  \right\}.
\end{equation}
The dissipation is finite only when
$\phi_h-\phi_h^{n-1}\in V_{h,0}$, so the mass constraint is implicit in the
$H^{-1}_{N,h}$ metric.  Introducing the chemical potential gives the mixed
Euler--Lagrange form: find $(\phi_h^n,\mu_h^n)\in V_h\times V_h$ such that
\begin{subequations}\label{eq:CH_FEM_h}
\begin{align}
  &\left(\frac{\xi(\phi_h^n-\phi_h^{n-1})}{\Delta t},\eta_h\right)_\Omega
  +(\nabla\mu_h^n,\nabla\eta_h)_\Omega=0,
  \qquad \forall \eta_h\in V_h, \label{eq:CH_FEM_h_a}\\
  &-(\mu_h^n,\psi_h)_\Omega
  +\sigma_{\nlg}\delta(\nabla\phi_h^n,\nabla\psi_h)_\Omega
  +\left(\frac{\sigma_{\nlg}}{\delta}F'(\phi_h^n),\psi_h\right)_\Omega
  \notag\\
  &\quad
  +\left\langle
  \frac{\zeta\delta(\phi_h^n-\phi_h^{n-1})}{\Delta t}
  +\frac{4}{3\sqrt2}(\sigma_{\nsl}-\sigma_{\nsg})f_w'(\phi_h^n),
  \psi_h
  \right\rangle_\Gamma
  =0,
  \qquad \forall \psi_h\in V_h. \label{eq:CH_FEM_h_b}
\end{align}
\end{subequations}

The fully discrete schemes inherit the variational properties of the
minimizing movements in Theorem~\ref{thm:JKO_energy}.  In particular, exact
minimizers of \eqref{eq:AC_JKO_h} and \eqref{eq:CH_JKO_h} satisfy the
corresponding discrete energy inequalities with $E$,
${\mathcal D}_{\text{AC}}$, and ${\mathcal D}_{\text{CH}}$ replaced by $E_h$,
${\mathcal D}_{\text{AC},h}$, and ${\mathcal D}_{\text{CH},h}$.  The
Allen--Cahn scheme conserves mass by the constraint \eqref{eq:AC_FEM_b}, while
the Cahn--Hilliard scheme conserves mass by taking $\eta_h=1$ in
\eqref{eq:CH_FEM_h_a}.  The same finite-dimensional convexity argument gives
conditional uniqueness under the time-step restrictions
\eqref{eq:AC_convex_dt} and \eqref{eq:CH_convex_dt}.

\subsection{Implementation details and linear system solvers}

The computations below were carried out with an MFEM implementation
\cite{MFEM} of the fully discrete schemes \eqref{eq:AC_FEM} and
\eqref{eq:CH_FEM_h}.  For the Allen--Cahn model, the code is written
with MPI support and is used in both two and three dimensions.  This is possible
because the linearized Allen--Cahn system admits a relatively scalable solver.
The Cahn--Hilliard model leads to a mixed system for $\phi_h$ and $\mu_h$, whose
linearized system is much harder to precondition.  In the computations reported
here, the Cahn--Hilliard solver is therefore used only in two dimensions, and
only in serial, with a UMFPACK direct solver.  We leave the development of a
scalable Cahn--Hilliard solver for future work.

For the Allen--Cahn scheme, the nonlinear residual in
\eqref{eq:AC_FEM_a}--\eqref{eq:AC_FEM_b} is assembled directly, including the
scalar Lagrange multiplier for the mass constraint.  At each Newton step, let
$A$ be the phase-field Jacobian with respect to $\phi_h$, let $m$ be the
assembled mass vector, and let $R_\phi$ and $R_c$ be the phase-field and
constraint residuals.  Instead of solving the full indefinite augmented system,
we eliminate the scalar multiplier by a one-dimensional Schur complement.  The
code solves
\[
  A u=-R_\phi,\qquad A q=m,
\]
and then sets
\[
  \delta\Lambda
  =
  \frac{-R_c-m^T u}{m^T q},
  \qquad
  \delta\phi=u+\delta\Lambda\,q .
\]
Thus each Newton iteration requires two solves with the same phase-field
Jacobian.  These solves are performed by a preconditioned GMRES solver with the
hypre BoomerAMG \cite{BoomerAMG} preconditioner.  The AMG preconditioner is
cached across linear solves and time steps.  It is rebuilt at the first solve,
after a rejected time step, or if the GMRES iteration count is larger than $30$.

For the current Cahn--Hilliard solver, Newton linearization of
\eqref{eq:CH_FEM_h_a}--\eqref{eq:CH_FEM_h_b} gives a coupled two-by-two block
system for the increments $(\delta\phi,\delta\mu)$.  This linearized system is
solved using the direct UMFPACK solver.  This gives a reliable serial
implementation for the two-dimensional tests below.  Since this direct solver
is not scalable, we do not report three-dimensional Cahn--Hilliard computations
here.

Both solvers use the same adaptive time-stepping safeguards.
We start with a target time step size $\Delta t$.
If Newton fails to converge for a given time step size $\Delta t$, the trial
state is discarded and the step is reduced by half to $\Delta t/2$, down to the
minimum allowed step size $\Delta t_{\min}$.  After an accepted shortened step,
the next proposed step is increased by at most a factor of two until the target
step size is recovered.
The Newton tolerance is $10^{-10}$ with at most $20$ iterations per time step.

\subsection{Numerical examples}

We now illustrate the fully discrete schemes with two wetting and dewetting
tests.  The examples are chosen to test the two features emphasized in the
analysis: recovery of the Young-angle equilibrium determined by the wall
energy, and the different transient behavior induced by the $L^2$ and
$H^{-1}$ bulk metrics.  Unless otherwise stated, we set
$\sigma_{\nlg}=1$, $\xi=\zeta=1$, and
$\sigma_{\nsl}-\sigma_{\nsg}=-\sigma_{\nlg}\cos\theta_Y$, so that the
prescribed parameter $\theta_Y$ is the Young angle in
\eqref{eq:Young_law}.

\subsubsection{Two-dimensional wetting and dewetting.}
The two-dimensional test uses a homogeneous planar substrate, for which the
sharp-interface equilibrium is a circular cap with contact angle
$\theta_Y$.  By symmetry, the computation is carried out on the half-domain
$\Omega=[0,1]\times[0,1]$ with substrate $\Gamma=[0,1]\times\{0\}$ and a
natural symmetry condition on $\{0\}\times[0,1]$.  The initial liquid region is
the rectangle $[0,0.5]\times[0,0.25]$, corresponding to the right half of the
full rectangular droplet $[-0.5,0.5]\times[0,0.25]$.  The spatial
discretization uses a uniform $128\times128$ rectangular mesh, so that
$h=1/128$, with polynomial degree $k=1$; the diffuse-interface thickness is
$\delta=0.01$.  Both the Allen--Cahn and Cahn--Hilliard computations use the
target time step $\Delta t=0.01$.  The initial state is deliberately far from
equilibrium: its corners generate bulk curvature relaxation, while the contact
line adjusts to the Young angle imposed by the wall energy.

For each model, we monitor the discrete free-energy decay rate
\[
  r_E^n=\frac{E(\phi_h^{n-1})-E(\phi_h^n)}{\Delta t}
\]
and stop at the first step for which $r_E^n<10^{-7}$.  The first three
interface panels compare the two models at the common physical times
$t=0.02$, $0.2$, and $1.0$.
The last panel compares the terminal
states after the stopping criterion has been reached.  For
$\theta_Y=45^\circ$, the terminal times are $t=3.44$ for Allen--Cahn and
$t=3.30$ for Cahn--Hilliard; for $\theta_Y=135^\circ$, they are $t=2.39$ and
$t=2.22$, respectively.

Figures~\ref{fig:2d_theta45_interfaces} and
\ref{fig:2d_theta135_interfaces} compare the zero level sets of the two
phase-field solutions with the exact sharp-interface circular cap of the same
area and Young angle.  The acute case spreads along the substrate, while the
obtuse case retracts from the initial rectangular footprint.  In both cases,
the late-time interfaces are close to the circular-cap equilibrium.  The
Allen--Cahn and Cahn--Hilliard transients are not identical, but their
difference decreases as the common equilibrium is approached.

\begin{figure}[H]
\centering
\includegraphics[width=0.96\textwidth]{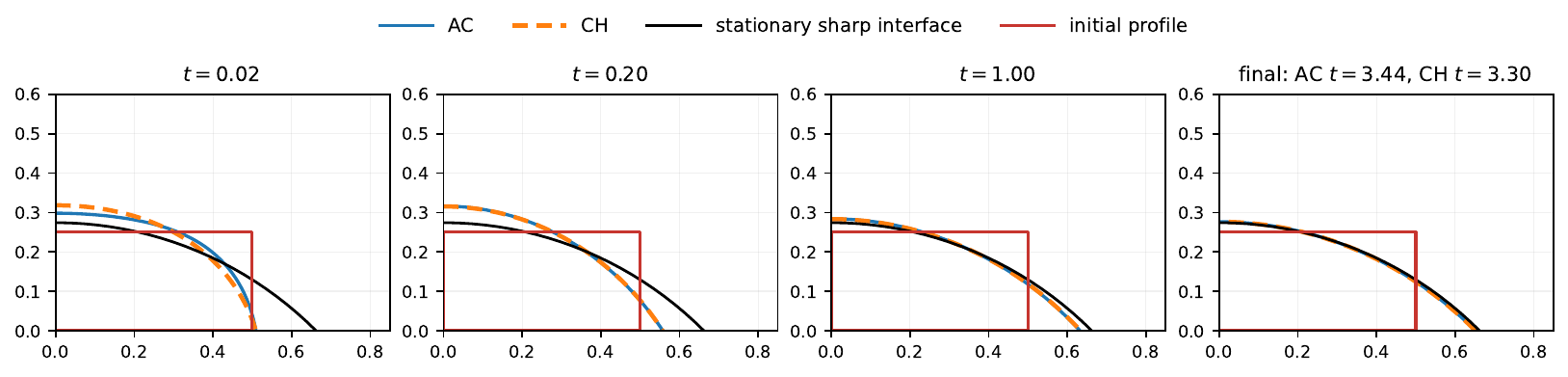}
\caption{Two-dimensional wetting for $\theta_Y=45^\circ$.  The solid blue
curve is the Allen--Cahn zero level set, the dashed orange curve is the
Cahn--Hilliard zero level set, the solid black curve is the stationary
sharp-interface circular cap, and the solid red curve is the initial profile.  The last panel
compares the terminal states at $t=3.44$ for Allen--Cahn and $t=3.30$ for
Cahn--Hilliard.}
\label{fig:2d_theta45_interfaces}
\end{figure}

\begin{figure}[H]
\centering
\includegraphics[width=0.96\textwidth]{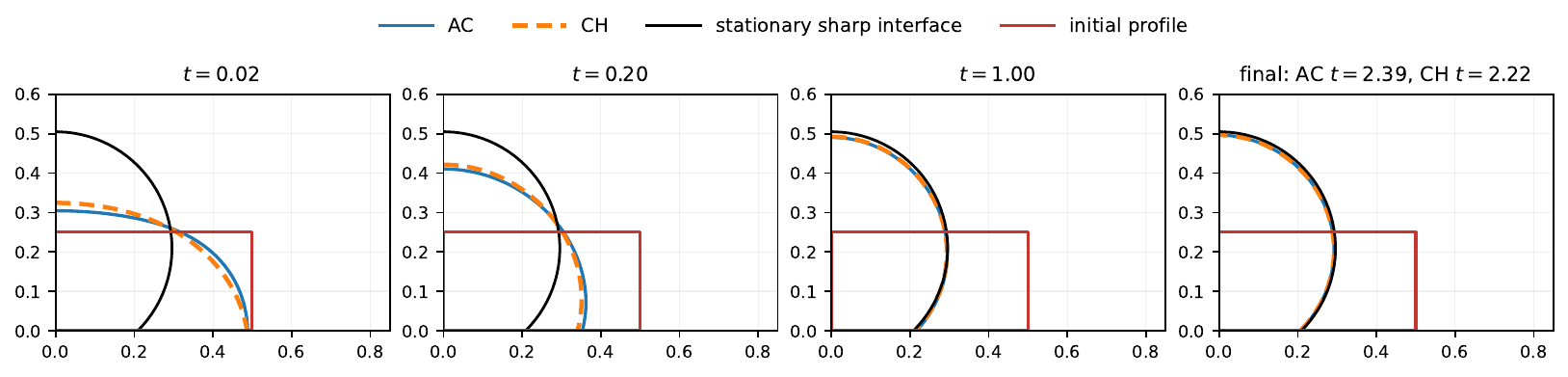}
\caption{Two-dimensional dewetting for $\theta_Y=135^\circ$.  The line
conventions are the same as in Figure~\ref{fig:2d_theta45_interfaces}.  The
droplet retracts from the initial rectangular footprint and approaches the
obtuse circular-cap equilibrium.  The last panel compares the terminal states
at $t=2.39$ for Allen--Cahn and $t=2.22$ for Cahn--Hilliard.}
\label{fig:2d_theta135_interfaces}
\end{figure}

The corresponding energy histories are shown in Figure~\ref{fig:2d_energy}.
The first row gives the wetting case, and the second row gives the dewetting
case; within each row, the left panel shows the total free energy and the right
panel shows the decay rate on a logarithmic scale.  In both contact-angle
regimes, the energy decreases rapidly during the initial corner smoothing and
contact-line adjustment, and then levels off as the cap becomes nearly
stationary.  The dotted line marks the threshold $10^{-7}$ used to choose the
terminal states.  The Cahn--Hilliard curve follows the same energetic trend,
with small differences in the transient decay rate caused by the $H^{-1}$ bulk
metric.

\begin{figure}[H]
\centering
\includegraphics[width=0.72\textwidth]{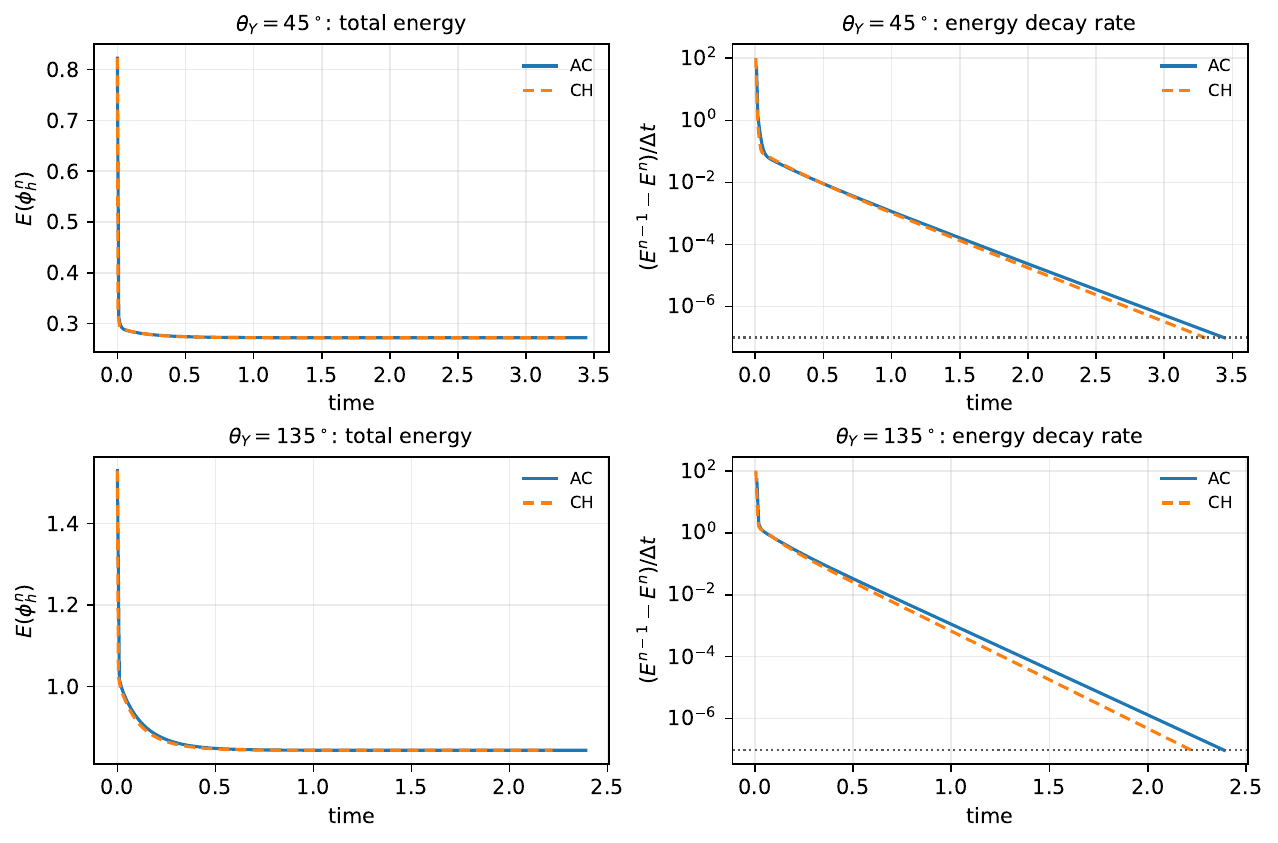}
\caption{Discrete free-energy histories for the two-dimensional wetting and
dewetting tests.  The first row corresponds to $\theta_Y=45^\circ$, and
the second row corresponds to $\theta_Y=135^\circ$.  The left column shows
$E(\phi_h^n)$, and the right column shows
$(E(\phi_h^{n-1})-E(\phi_h^n))/\Delta t$ on a logarithmic scale.  The dotted
line indicates the threshold $10^{-7}$ used to select the terminal states.}
\label{fig:2d_energy}
\end{figure}

Mass conservation is shown in Figure~\ref{fig:2d_mass}.  We plot the drift
$(\phi_h^n-\phi_h^0,1)_\Omega$ rather than the absolute mass, since the latter
is visually indistinguishable from a constant on the scale of the computation.
The drift remains at the level of roundoff and nonlinear solver tolerance in
all four runs.  For Allen--Cahn, conservation follows from the scalar constraint
\eqref{eq:AC_FEM_b}; for Cahn--Hilliard, it follows from
\eqref{eq:CH_FEM_h_a} by testing with the constant function.

\begin{figure}[H]
\centering
\includegraphics[width=0.72\textwidth]{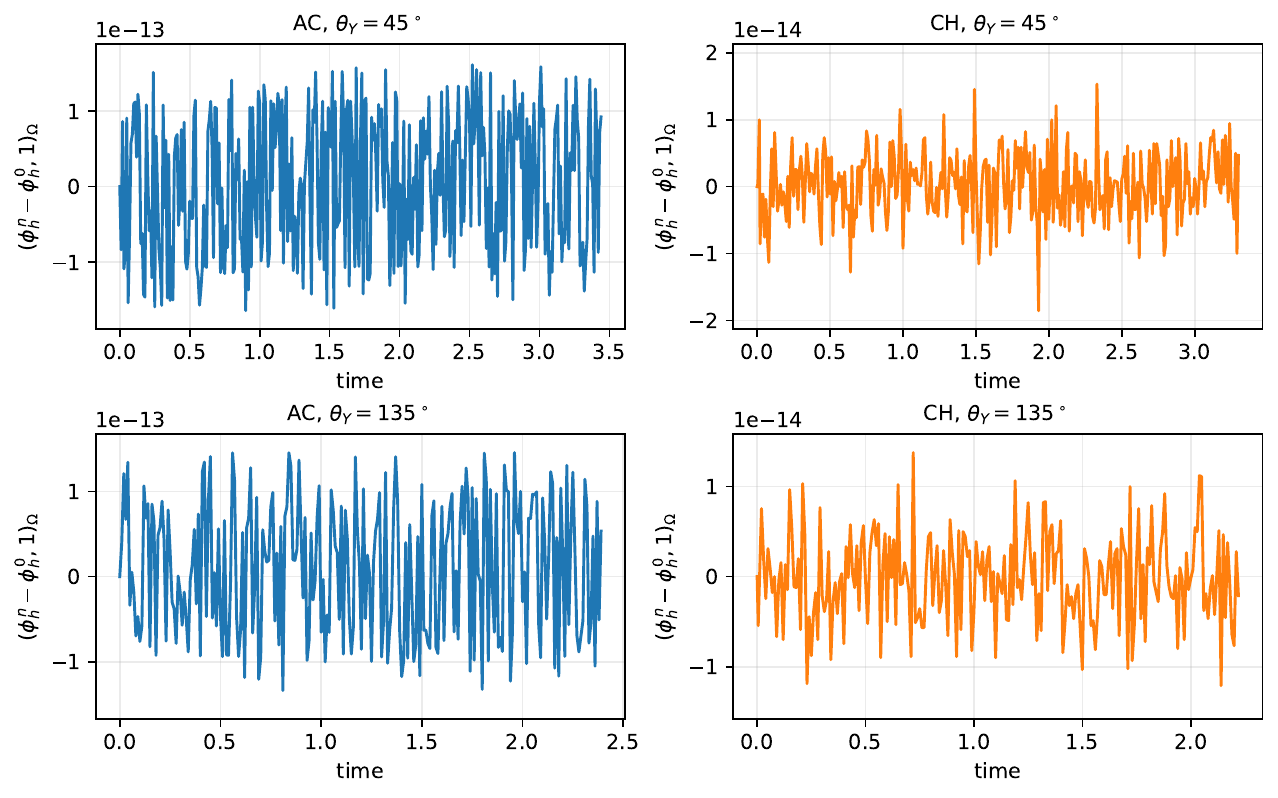}
\caption{Discrete mass conservation for the four two-dimensional runs.  Each
panel shows the drift of the conserved quantity
$(\phi_h^n,1)_\Omega$ from its initial value.}
\label{fig:2d_mass}
\end{figure}

\subsubsection{Three-dimensional wetting and dewetting.}
We next use the same setup in three dimensions.  Because the equilibrium
droplet is axisymmetric, the computation is carried out on the quarter domain
$\Omega=[0,1]^3$, with symmetry conditions on the planes $x=0$ and $y=0$ and
the substrate on $z=0$.  The initial liquid region is the
rectangular block $[0,0.5]\times[0,0.5]\times[0,0.25]$, corresponding to a full
liquid volume $0.25$ after reflection across the symmetry planes.  Equivalently,
the conserved phase-field mass on the quarter domain is
$(\phi_h^0,1)_\Omega=0.875$.  The spatial discretization uses a uniform
$128\times128\times128$ mesh with $k=1$, the diffuse-interface thickness is
$\delta=0.01$, and the target time step is $\Delta t=0.01$.  We stop the
simulation when the discrete energy decay rate falls below $10^{-7}$.

For a homogeneous substrate, the sharp-interface equilibrium in three
dimensions is a spherical cap with the prescribed Young angle and the same
liquid volume.  We therefore compare the numerical zero level set with this
exact cap.  Figure~\ref{fig:3d_theta45_xz} shows the $y=0$ cross-section for
$\theta_Y=45^\circ$.  As in the two-dimensional wetting case, the initially
rectangular droplet spreads along the substrate and relaxes toward the
spherical-cap profile.  The final time is $t=2.69$, at which point the
energy-decay stopping criterion is reached.

\begin{figure}[H]
\centering
\includegraphics[width=0.96\textwidth]{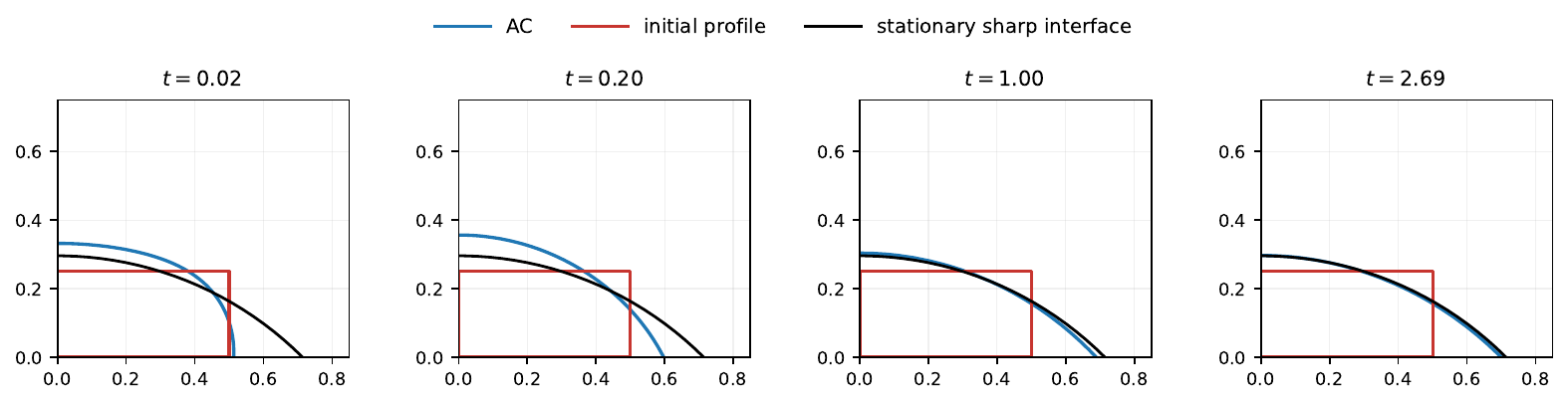}
\caption{Three-dimensional Allen--Cahn wetting for $\theta_Y=45^\circ$.
The figure shows the $y=0$ cross-section on the simulated quarter domain.  The
solid blue curve is the numerical zero level set, the solid red curve is the
initial profile, and the solid black curve is the stationary sharp-interface
spherical cap with the same liquid volume and Young angle.}
\label{fig:3d_theta45_xz}
\end{figure}

Figure~\ref{fig:3d_theta135_xz} shows the corresponding dewetting computation
for $\theta_Y=135^\circ$.  The same initial block is used, but the wall energy
now favors a much smaller wetted area.  The droplet retracts from the initial
footprint and develops the overhanging profile expected for an obtuse
Young angle.  The snapshots show the same qualitative behavior as in
the two-dimensional dewetting test.  The final time is $t=2.80$, at which
point the energy-decay stopping criterion is reached.
In both three-dimensional cases, the numerical zero level set is visibly
slightly inside the sharp-interface curve.  This is consistent with the finite
diffuse-interface thickness $\delta=0.01$ used in the computation.

\begin{figure}[H]
\centering
\includegraphics[width=0.96\textwidth]{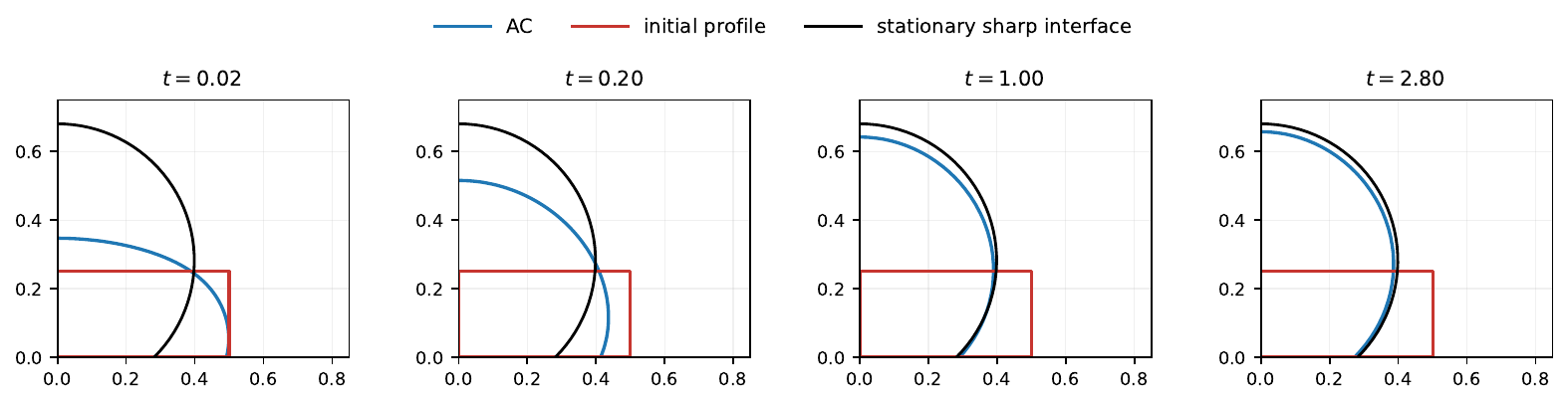}
\caption{Three-dimensional Allen--Cahn dewetting for $\theta_Y=135^\circ$.
The figure shows the $y=0$ cross-section on the simulated quarter domain.  The
line conventions are the same as in Figure~\ref{fig:3d_theta45_xz}.}
\label{fig:3d_theta135_xz}
\end{figure}

The energy histories are shown in Figure~\ref{fig:3d_energy}.  For both contact
angles, the free energy decreases rapidly during the initial relaxation and then
settles into a slow decay.  Both runs reach the threshold $10^{-7}$.  The trend
is consistent with the two-dimensional tests: the variational structure drives
monotone relaxation toward the cap selected by Young's law.

\begin{figure}[H]
\centering
\includegraphics[width=0.60\textwidth]{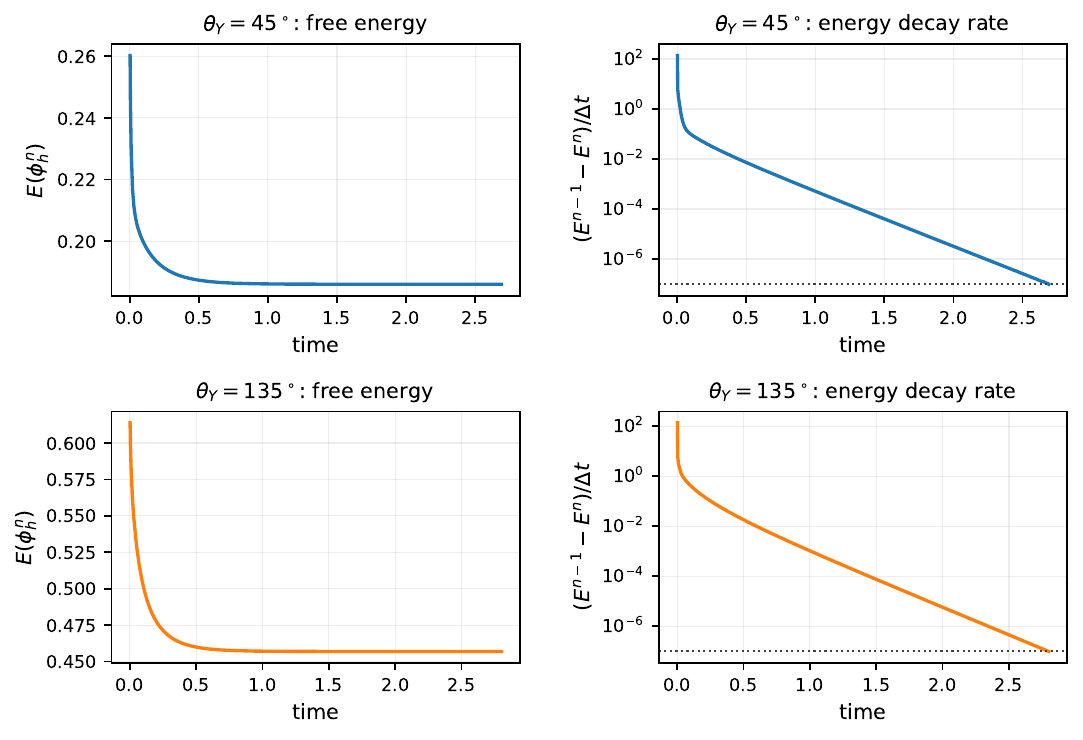}
\caption{Discrete free-energy histories for the three-dimensional Allen--Cahn
wetting and dewetting tests.  The first row corresponds to
$\theta_Y=45^\circ$, and the second row corresponds to
$\theta_Y=135^\circ$.  The left column shows $E(\phi_h^n)$, and the right
column shows $(E(\phi_h^{n-1})-E(\phi_h^n))/\Delta t$ on a logarithmic scale.
The dotted line indicates the stopping threshold $10^{-7}$.}
\label{fig:3d_energy}
\end{figure}

The same comparison can also be displayed by reflecting the computed interface
across the two symmetry planes.  Figure~\ref{fig:3d_surface_compare} plots the
reconstructed zero level sets together with the exact spherical caps.  The left
panel uses the terminal wetting state for $\theta_Y=45^\circ$, and the right
panel uses the terminal dewetting state for $\theta_Y=135^\circ$.  These
surfaces provide a direct three-dimensional analogue of the two-dimensional
sharp-interface comparisons.

\begin{figure}[H]
\centering
\includegraphics[width=0.68\textwidth]{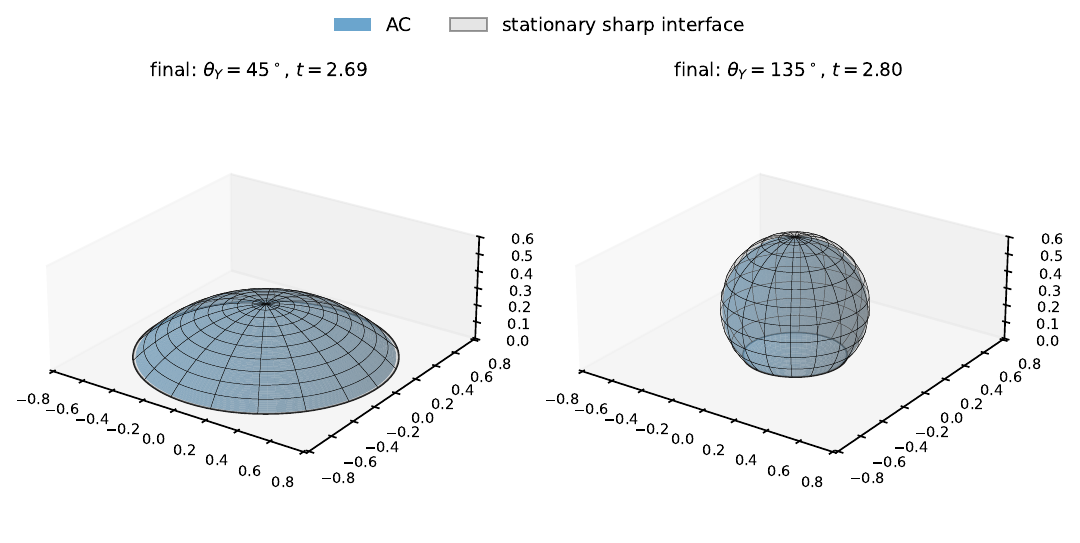}
\caption{Three-dimensional reconstructed interfaces.  The blue surfaces are the
Allen--Cahn zero level sets after reflection across the symmetry planes, and
the gray surfaces are the corresponding sharp-interface spherical caps with the
same liquid volume.  The left panel shows the terminal wetting state
$(\theta_Y=45^\circ)$; the right panel shows the terminal dewetting state
$(\theta_Y=135^\circ)$.}
\label{fig:3d_surface_compare}
\end{figure}

\section{Conclusion}

In this paper, we studied Allen--Cahn and Cahn--Hilliard phase-field models for
contact line dynamics and their sharp interface limit from a unified Onsager's  variational point of view.  Starting from
the same bulk and wall free energy, both models encode the relaxation of the
capillary interface and the motion of the contact line through an
energy-dissipation structure.  The two models differ in the bulk metric:
Allen--Cahn uses a volume-constrained $L^2$ dissipation metric, while Cahn--Hilliard uses an
$H^{-1}$ metric.  This difference changes the transient evolution of the  sharp interface, but not the
  stationary profile selected by the free energy.

The matched asymptotic analysis shows that both phase field models
recover sharp interface contact line dynamics as $\delta\to0$.  For the
Allen--Cahn model, the limiting interface evolves by mean curvature together
with the contact line law driven by the deviation from Young's angle.  For the
Cahn--Hilliard model, the limiting bulk problem is of Mullins--Sekerka type,
with the same contact line relaxation law on the substrate.  The geometric
relation between the interface normal velocity and the contact line velocity is
essential in matching the diffuse and sharp formulations.

We also derived minimizing-movement finite element schemes for both models.
The numerical tests confirm the main qualitative picture: the Allen--Cahn and
Cahn--Hilliard schemes relax toward the same stationary sharp interface spherical cap solution, while their paths to equilibrium reflect the different bulk
dissipation.  In the   wetting and dewetting tests, both models converges to the stationary profile with monotone energy decay.   

\section*{Acknowledgments}
G. Fu was partially supported by NSF DMS-2410740.
Y. Gao was partially supported by NSF DMS-2204288 and  CAREER award DMS-2440651. This work was partially completed while J.-G. Liu was in residence at the Simons Laufer Mathematical Sciences Institute in Berkeley, California, during the Fall 2025 semester.

\section*{Declarations}

The authors have no conflict of interest.

\section*{Data Availibility}

 All data generated or analysed during this study are included in this article and  are available in the data repository at \href{https://github.com/gridfunction/mfem-contact-line-dynamics}{https://github.com/gridfunction/mfem-contact-line-dynamics}.

\end{document}